\pgfplotsset{
  width=.65\linewidth,
  axis background/.style={fill=black!5!white},
  grid style={densely dotted,semithick},
  legend style={
    legend columns=1,
    legend pos=outer north east
  },
  compat=newest 
}
\numberwithin{equation}{section}
\newcommand{\Div}{\divergence}
\newcommand{\R}{\mathbb R}
\newcommand{\dd}{\,\mathrm{d}}
\newcommand{\ds}{\dd s}
\newcommand{\dt}{\dd t}
\newcommand{\dx}{\dd x}
\newcommand{\dy}{\dd y}
\newcommand{\dz}{\dd z}
\newcommand{\dsigma}{\dd\sigma}
\providecommand{\I}[1][]{\textrm{I}_{#1}}
\providecommand{\II}[1][]{\textrm{II}_{#1}}
\providecommand{\III}[1][]{\textrm{III}_{#1}}
\providecommand{\seminormtmp}[2]{{#1[{#2}#1]}}
\providecommand{\seminorm}[1]{\seminormtmp{}{#1}}
\providecommand{\PiSZzero}{\Pi_{\mathrm{SZ}}^0}
\providecommand{\PiSZone}{\Pi_{\mathrm{SZ}}^1}
\begin{document}

\title{The parabolic $p$-Laplacian with fractional
  differentiability}

\author{Dominic Breit \and Lars Diening \and Johannes Storn \and
  J\"{o}rn Wichmann}%
\address[D. Breit]{Department of Mathematics, Heriot-Watt University,
  Edinburgh EH14 4AS, UK.}%
\email{d.breit@hw.ac.uk}%
\address[L. Diening, J. Storn,
J. Wichmann]{Department of Mathematics, University of Bielefeld,
  Postfach 10 01 31, 33501 Bielefeld, Germany}%
\email{lars.diening@uni-bielefeld.de}%
\email{jstorn@math.uni-bielefeld.de}%
\email{jwichmann@math.uni-bielefeld.de}%

\begin{abstract}
  We study the parabolic $p$-Laplacian system in a bounded domain.  We
  deduce optimal convergence rates for the space-time discretization
  based on an implicit Euler scheme in time. Our estimates are
  expressed in terms of Nikolskii spaces and therefore cover
  situations when the (gradient of) the solution has only fractional
  derivatives in space and time. The main novelty is that, different
  to all previous results, we do not assume any coupling condition
  between the space and time resolution $h$ and $\tau$. The
  theoretical error analysis is complemented by numerical experiments.
\end{abstract}

\thanks{This research was supported by the DFG through the CRC 1283.}

\subjclass[2010]{%
  65N15, 
  65N30, 
  35K55, 
  35K65, 
}
\keywords{Parabolic PDEs, Nonlinear Laplace-type systems, Finite
  element methods, Space-time discretization, p-heat equation}

\maketitle

   \begin{center}
   \textbf{Dedicated to the memory of John W. Barrett}
   \end{center}
\section{Introduction}
\label{sec:introduction}
Let $\Omega\subset\R^n$ with Lipschitz boundary, $n\geq 2$, $N\geq1$,
$T>0$ be finite and assume that $\bff:Q\rightarrow\mathbb{R}^N$ and
$\bfu_0: \Omega \rightarrow \R^N$ are given and let $Q:=I\times\Omega$
with $I:=(0,T)$. We are interested in the parabolic $p$-Laplace system
\begin{alignat}{2}
  \label{eq:heat}
  \begin{aligned}
    \partial_t \bfu
    -\Div\big((\kappa+|\nabla\bfu|)^{p-2}\nabla\bfu\big) &=\bff&\qquad
    &\text{in $Q$,}
    \\
    \bfu&=0& &\text{on $I\times\partial\Omega$,}
    \\
    \bfu(0,\cdot) &=\bfu_0 &\qquad& \text{in $\Omega$.}
  \end{aligned}
\end{alignat}
with $\kappa\geq0$ and $p\in(1,\infty)$. The existence of a unique
weak solution to \eqref{eq:heat} in the function space
\begin{align*}
  C(\overline I;L^2(\Omega))\cap L^p(I;W^{1,p}_0(\Omega))
\end{align*}
can be shown by standard monotonicity arguments under very weak
assumptions on the data. We are concerned with its numerical
approximation by finite elements. For this purpose we choose discrete
subspace~$V_h$ of $W^{1,p}_0(\Omega)$, which consists of piece-wise
polynomials on a quasi-uniform triangulation of mesh
size~$h$. Furthermore, we use an implicit Euler scheme with step
size~$\tau= \frac{T}{M+1}$ for the time discretization. The discrete
solution $\bfu_{m,h}$ is given at time points $t_m=m\tau$,
$m=0,\dots,M$ of the time grid.

Many authors have studied the error of these
discretization, e.g. \cite{Wei1992,BaLi2,EbLi,DER,BarDieNoc20}. A variety of quantities
has been used to express the error and many error estimates have been
deduced under different regularity assumptions on the solution~$\bfu$. It
turned out that the natural quantity to measure the error between the
discrete and continuous solution is
\begin{align}
  \label{eq:natural-error}
  \max_{0\leq m\leq M}\Vert\bfu(t_m)-\bfu_{m,h}\Vert_{L^2(\Omega)}^2+ &\sum_{m=1}^M \, \Vert\bfV(\nabla\bfu(t_m))-\bfV(\nabla\bfu_{m,h})\Vert_{L^2(\Omega)}^2,
\end{align}
where $\bfV(\bfxi)=(\kappa+|\bfxi|)^{\frac{p-2}{2}}\bfxi$.

The term
$\norm{\bfV(\nabla\bfu(t_m))-\bfV(\nabla\bfu_{m,h})}_{L^2(\Omega)}^2$
is natural to problems involving the $p$-Laplacian and captures the
nonlinear character of the equation. It has been introduced
by~\cite{BaLi2} for the numerical analysis of the stationary problem
($p$-Poisson problem)
\begin{alignat}{2}
  \label{eq:ppoisson}
  \begin{aligned}
    -\Div\big((\kappa+|\nabla\bfu|)^{p-2}\nabla\bfu\big) &=\bff&\qquad
    &\text{in $\Omega$,}
    \\
    \bfu&=0& &\text{on $\partial\Omega$}
  \end{aligned}
\end{alignat}
in a slightly different but equivalent form under the
name~\emph{quasi-norm}. Note that if $\kappa=0$~\eqref{eq:ppoisson} is the
Euler-Lagrange equation of the
energy~$$\mathcal{J}(\bfv) := \int_\Omega \Big(\frac 1p \abs{\nabla \bfv}^p
- \bfv \cdot \bff\Big)\dx.$$
It has been
observed in~\cite{DieKre08} that the quantity
$\norm{\bfV(\nabla\bfu)-\bfV(\nabla\bfu_h)}_{L^2(\Omega)}^2$ is
equivalent to the energy
error~$\mathcal{J}(\bfu_h)-\mathcal{J}(\bfu)$.  This explains that the
quantities in~\eqref{eq:natural-error} are the natural way to express
the error.  The
variational approach using~$\mathcal{J}$ has been also used
in~\cite{BelDieKre2012} to prove optimal convergence of the adaptive
finite element method for the $p$-Poisson problem using D\"orfler
marking. It has been shown, starting with the seminal paper by
Barrett and Liu~\cite{BaLi1} and with the subsequent extensions by Ebmeyer and
Liu~\cite{EbLi} and by Diening and \Ruzicka{}~\cite{DR}, that solutions
to~\eqref{eq:ppoisson} satisfy
\begin{align*}
  \Vert\bfV(\nabla\bfu(t_m))-\bfV(\nabla\bfu_{m,h})\Vert_{L^2(\Omega)}
  &\lesssim h\, \norm{\nabla \bfV(\nabla \bfu)}_{L^2(\Omega)}.
\end{align*}
The required regularity~$\nabla \bfV(\nabla \bfu)\in L^2(\Omega)$ for the continuous solutions is well-known for problems involving the $p$-Laplacian. It arises naturally when testing the equation by $\Delta\bfu$. This test can be made rigorous by the method of
difference-quotients under appropriate assumptions on the data (for instance for
convex~$\Omega$ or~$\Omega$ with~$C^{1,\alpha}$-boundary).

In the instationary setting the natural regularity using
difference-quotients in time and space is
\begin{subequations}
  \label{eq:regt-full}
  \begin{alignat}{2}
    \bfV(\nabla\bfu) &\in
    L^2(I;W^{1,2}(\Omega)))\cap
    W^{1,2}(I;L^2(\Omega)),
    \\
    \bfu &\in
    L^\infty(I;W^{1,2}(\Omega))\cap
    \mathcal C^{0,1}(\overline I;L^2(\Omega)).
  \end{alignat}
\end{subequations}
It is well-known that weak solutions to \eqref{eq:heat} enjoy the
properties~\eqref{eq:regt-full} provided the data is
regular enough and~$\Omega$ is either convex or has~$C^{1,\alpha}$
boundary.

The expected optimal convergence result for linear elements under the
regularity assumption~\eqref{eq:regt-full} is
\begin{align}
  \label{eq:error0}
  \begin{aligned}
    \max_{0\leq m\leq
      M}\Vert\bfu(t_m)-\bfu_{m,h}\Vert_{L^2(\Omega)}^2+ &\tau \sum_{m=1}^M
    \,   \Vert\bfV(\nabla\bfu(t_m))-\bfV(\nabla\bfu_{m,h})\Vert_{L^2(\Omega)}^2
    \\
    &\lesssim h^2+ \tau^2.
  \end{aligned}
\end{align}
The analysis of implicit Euler schemes for \eqref{eq:heat} started
with the work of Wei~\cite{Wei1992}, who considered the planar case
for~$p\geq 2$ and obtained sub-optimal estimates for the first part of
the error only. In particular, he showed that
$\smash{\max_{ m } \norm{\bfu(t_m) - \bfu_{m,h}}_{L^2(\Omega)}^2}$ is
of order $\smash{h^{\frac{1}{(p-1)}}} + \tau$ provided
that~$\bfu \in C(\overline{I}, W^{2,p}(\Omega))$. Liu and Barett
derived in~\cite{BaLi2} significantly better estimates for all
$1<p<\infty$ , but still sub-optimal compared
to~\eqref{eq:error0}. Instead of~$h^2 +\tau^2$ in~\eqref{eq:error0}
they obtained $h^{\min \set{p,2}}+ \tau$ for
$\smash{\max_{ m } \norm{\bfu(t_m) - \bfu_{m,h}}_{L^2(\Omega)}^2}$
under strong regularity assumptions of the solution.


The optimal rate~\eqref{eq:error0} has been obtained by Diening,
Ebmeyer and~\Ruzicka{} in~\cite{DER} for piece-wise linear elements
under the assumption $p > \frac{2n}{n+2}$.\footnote{The restriction $p>\frac{2n}{n+2}$
comes from the use of Gelfand triples, which requires
$W^{1,p}(\Omega) \embedding L^2(\Omega)$; but could be avoided.} However, their analysis has
the drawback that there is an unnatural coupling of the time-step
size~$\tau$ and~$h$. In particular, for their optimal convergence
result it is required that
\begin{align}
  \label{eq:cond-EDR}
  h^{\beta(p,n)} &\lesssim \tau,
\end{align}
where $\beta(p,n) = 2 - n(1 - \frac p2)$ if $p \in (\frac{n+2}{2n},2]$ and
$\beta(p,n) = n + \frac{2(2-n)}{p}$ for $p \in [2,\infty)$. The bound
for~$p \leq 2$ has been recently improved in~\cite{BerRuz19arxiv}
to~$\beta(p,n) = \frac{4}{p'}$, where $p'=\frac{p}{p-1}$. Note that
different from the well-known CFL condition this is an upper bound
for~$h$ in terms of~$\tau$. Nevertheless, this artificial condition is
very much undesired. It is well known that for the linear case~$p=2$
such a condition is not needed.
The main contribution of this paper is to remove such artificial
restriction completely and to prove that the error
estimate~\eqref{eq:error0} holds for any choice of $h$ and $\tau$
under the regularity assumption~\eqref{eq:regt-full}.

The reason for the coupling between $h$ and $\tau$ in~\eqref{eq:cond-EDR} is the use
of the Scott-Zhang interpolation~$\PiSZone$ operator~\cite{ScoZha90} in
the numerical analysis. This operator has very nice local
properties which have been used in~\cite{DR} to control the
approximation
error~$\norm{\bfV(\nabla \bfu) - \bfV(\nabla \PiSZone \bfu)}_{L^2(\Omega)}^2$ in terms of~$h^2\norm{\nabla \bfV(\nabla \bfu)}_{L^2(\Omega)}^2$. However, the operator~$\PiSZone$ is not self-adjoint and the
treatment of the new term arising from~$\partial_t \bfu$ in the
instationary setting becomes harder to estimate. To overcome this
problem we rather employ the $L^2$-projection $\Pi_2$ onto the finite
element space $V_h$. This, however, requires a control of the new
term~$\norm{\bfV(\nabla \bfu) - \bfV(\nabla \Pi_2
  \bfu)}_{L^2(\Omega)}^2$. Since~$\Pi_2$ is not a local operator, the latter control is
rather delicate. We are able to overcome the arising problems by the use of
sophisticated decay estimates for the~$L^2$-projection due to Eriksson
and Johnson~\cite{ErissonJohnson1995} and Boman~\cite{Boman}. Our
estimates for~$\norm{\bfV(\nabla \bfu) - \bfV(\nabla \Pi_2 \bfu)}_{L^2(\Omega)}^2$
are summarized in Theorem~\ref{thm:Vstab-L2}.

Our approach turns out to be flexible enough to even accommodate
problems with fractional differentiability. If the data (initial
datum, forcing term or boundary of the domain) is not regular enough,
weak solutions fail to enjoy the properties~\eqref{eq:regt-full}. Consequently, an error estimate of the form
\eqref{eq:error0} cannot be expected. In many cases, however, there is
at least some fractional differentiability available and one has
\begin{subequations}
  \label{eq:regt-full-alpha}
  \begin{alignat}{2}
    \bfV(\nabla\bfu) &\in
    L^2(I;N^{\alpha_x,2}(\Omega))\cap
    N^{\alpha_t,2}(I;L^2(\Omega)),
    \\
    \bfu &\in
    L^\infty(I;N^{\alpha_x,2}(\Omega))\cap
    \mathcal{C}^{0,\alpha_t}(\overline{I};L^2(\Omega)),
  \end{alignat}
\end{subequations}
for some $\alpha_x,\alpha_t\in (0,1]$. Here $N^{\alpha,2}$ denotes the
\Nikolskii{} space with differentiability $\alpha\in(0,1]$, see
Section \ref{sec:2} for details. The corresponding error estimate
under these assumptions for~$\alpha_t> \frac 12$ is
\begin{align}
  \label{eq:error1}
  \begin{aligned}
    \max_{0\leq m\leq M}\Vert\bfu(t_m)-\bfu_{m,h}\Vert_{L^2(\Omega)}^2+ &\tau \sum_{m=1}^M \,  \Vert\bfV(\nabla\bfu(t_m))-\bfV(\nabla\bfu_{m,h})\Vert_{L^2(\Omega)}^2
    \\
    &\lesssim h^{2\alpha_x}+ \tau^{2\alpha_t}.
  \end{aligned}
\end{align}
The condition~$\alpha_t > \frac 12$ is necessary for the point
evaluation of~$\bfV(\nabla \bfu)$ using the
embedding~$N^{\alpha_t,2}(I;L^2(\Omega))\hookrightarrow C(\overline
I;L^2(\Omega))$.  Indeed, such an error estimate has been shown by
Breit and Mensah~\cite{BreMen18pre} in the more general situation of
variable exponents~$p=p(t,x)$ but again under some condition
coupling~$h$ and~$\tau$. In particular, they require that
$h \lesssim \tau^{\frac{1+2\alpha_t}{2 \alpha_x}}$. They also require
a very weak form of the CFL-condition, namely that~$\tau^r \lesssim h$
for some arbitrary, large~$r>0$.

For general $\alpha_t \in (0,1]$ we switch to the following averaged
version of the error estimate
\begin{align}
  \label{eq:error2}
  \begin{aligned}
    \max_{1\leq m\leq M}\Vert \mean{\bfu}_{J_m}-\bfu_{m,h}\Vert_{L^2(\Omega)}^2+ &\sum_{m=1}^M \, \int_{t_{m-1}}^{t_{m+1}} \Vert\bfV(\nabla\bfu(s))-\bfV(\nabla\bfu_{m,h})\Vert_{L^2(\Omega)}^2 \ds
    \\
    &\lesssim  h^{2\alpha_x}+ \tau^{2\alpha_t},
  \end{aligned}
\end{align}
where $\mean{\bfu}_{J_m}$ is a time average of~$\bfu$ over the
intervall $J_m = [t_{m-1},t_{m+1}]$ for $m \geq 1$.  This error
estimate is the main result of this paper under the assumption
\begin{subequations}
  \label{eq:regt-full-alpha2}
  \begin{alignat}{2}
    \bfV(\nabla\bfu) &\in
    L^2(I;N^{\alpha_x,2}(\Omega))\cap
    N^{\alpha_t,2}(I;L^2(\Omega)),
    \\
    \bfu &\in L^\infty(I;N^{\alpha_x,2}(\Omega)).
  \end{alignat}
\end{subequations}
All exponents $\alpha_x,\alpha_t\in (0,1]$ are allowed and a coupling
between $h$ and $\tau$ is not needed. The precise statement can be
found in Theorem \ref{thm:main}. If additionally
$\bfu \in \mathcal{C}^{0,\alpha_t}(\overline{I};L^2(\Omega))$, then we
have also control on the pointwise error (see
Remark~\ref{rem:u-hoelder})
\begin{align}
  \max_{1\leq m\leq M}\Vert \bfu(t_m)-\bfu_{m,h}\Vert_{L^2(\Omega)}^2
  &\lesssim  h^{2\alpha_x}+ \tau^{2\alpha_t}.
\end{align}
A main motivation for considering a low time-regularity of the
solution comes from stochastic PDEs. In this case the equations are
driven by a Wiener process which only belongs to the class
$\mathcal C^{0,\alpha_t}(I)$ for all $\alpha_t<\frac{1}{2}$.
Consequently, only a regularity of the form \eqref{eq:regt-full-alpha}
with $\alpha_t<\frac{1}{2}$ is expected. Thus, point evaluations in
time like~$\bfV(\nabla \bfu(t_m))$ as they appear in~\eqref{eq:error1} may
not be possible in the stochastic case.  This problem was circumvented
in~\cite{BreHof}  by the use of randomly perturbed time grids. In
expectation this corresponds to the time averages that we use in this paper.


The paper is organised as follows. In Section \ref{sec:2} we introduce
the analytical setup for equation~\eqref{eq:heat} followed by the
discrete version in Section \ref{sec:discrete-p-heat}. Section
\ref{sec:proj-p-lapl} is devoted to the study of the $L^2$-projection
$\Pi_2$ with respect to the approximability of~$\bfV(\nabla \bfu)$. The
result can be found in Theorem~\ref{thm:Vstab-L2}.  The main error
analysis and the prove of the main result~\eqref{eq:error2} without
any $h$ and $\tau$ coupling can be found in Section~\ref{sec:error} in
Theorem~\ref{thm:main}. Section \ref{sec:exp} contains the results of a numerical simulation study
concerning the discretisation error.
In the appendix we recall some well-known results on Orlicz functions which are needed
throughout the paper.

\section{The continuous equation}
\label{sec:2}

In this section we introduce the analytical setup for
equation~\eqref{eq:heat} including the function spaces.  Let
$\Omega\subset\Rn$ for $n \geq 2$ be a bounded Lipschitz domain
(further assumptions on $\Omega$ will be needed for the regularity of
solutions and the numerical analysis respectively).  For some given
$T>0$ we denote by $I=(0,T)$ the time interval and write
$Q := I \times \Omega$ for the space time cylinder. We write
$f\lesssim g$ for two non-negative quantities $f$ and $g$ if we $f$ is
bounded by $g$ up to a multiplicative constant. The relations
$\gtrsim$ and $\eqsim$
are defined accordingly. We denote by~$c$ a generic constant which can
change its value from line to line.

As usual $L^q(\Omega)$ denotes the Lebesgue spaces and
$W^{1,q}(\Omega)$ the Sobolev spaces, where $1\leq q\leq \infty$. We
denote by~$W^{1,q}_0(\Omega)$ Sobolev spaces with zero boundary
values. It is the closure $C^\infty_0(\Omega)$ (smooth functions with
compact support) in $W^{1,q}(\Omega)$. We denote by
$W^{-1,q'}(\Omega)$ the dual of $W^{1,q}_0(\Omega)$.  In order to
express higher regularity of the solutions we need the notation of
\Nikolskii{} spaces.  For $q\in [1,\infty)$ and $\alpha\in (0,1]$ we
define the semi-norm and norm
\begin{align*}
  \seminorm{u}_{N^{\alpha,q}(\Omega)} &:=
  \sup_{h \in \R^n\backslash\set{0}}
  \abs{h}^{-\alpha}\bigg( \int_{\Omega \cap (\Omega
-h)}|u(x+h)-u(x)|^q \dx
  \bigg)^\frac{1}{q},
  \\
  \norm{u}_{N^{\alpha,q}(\Omega)} &:=
                                    \|u\|_{L^q(\Omega)}+\seminorm{u}_{N^{\alpha,q}(\Omega)}.
\end{align*}
The Nikolskii space $N^{\alpha,q}(\Omega)$ is now defined as the
subspace of $L^q(\Omega)$ consisting of functions having finite
$\norm{\cdot}_{N^{\alpha,q}(\Omega)}$-norm. We call
$\seminorm{\cdot}_{N^{\alpha,q}(\Omega)}$ the semi-norm of
$N^{\alpha,q}(\Omega)$. Vector- and matrix-valued functions will
usually be denoted in bold case, whereas normal case will be adopted
for real-valued functions. We do not distinguish in the notation for
the function spaces.

For a separable Banach space~$(X,\|\cdot\|_X)$ let $L^q(I;X)$ be the
Bochner space of (Bochner-) measureable functions $u:I\rightarrow X$
satisfying $t\mapsto\|u(t)\|_{X}\in L^q(I)$.  Moreover,
$C(\overline{I};X)$ is the space of function
$u:\overline I\rightarrow X$ which are continuous with respect to the
norm-topology. We also use $\mathcal C^{0,\alpha}$ for the space of H\"older
continuous functions and its generalization $C^{k,\alpha}$ for higher
order derivatives. Similarly to the above, we can define fractional
derivatives in time for functions $u:I\rightarrow X$, where
$(X,\|\cdot\|_X)$ is a separable Banach space.  We define for
$q\in[1,\infty)$ and $\alpha\in (0,1]$ the semi-norm and norm
\begin{align*}
  \seminorm{u}_{N^{\alpha,q}(I;X)} &:= \sup_{\tau \in I} \abs{\tau}^{-\alpha}
                                     \bigg(\int_{I \cap (I -
                                     \tau)}\|u(\sigma +\tau)-u(\sigma)\|_{X}^q  \dsigma
                                     \bigg)^{\frac 1q},
  \\
  \norm{u}_{N^{\alpha,q}(I;X)} &:=
                                 \|u\|_{L^{p}(I;X)}+   \seminorm{u}_{N^{\alpha,q}(I;X)}.
\end{align*}
The Nikolskii space $N^{\alpha,q}(I;X)$ is now defined as the subspace
of the Bochner space $L^{q}(I;X)$ consisting of the functions
having finite $\norm{\cdot}_{N^{\alpha,q}(I;X)}$-norm.

For a given force $\bff:Q\rightarrow\mathbb{R}^N$ and initial value
$\bfu_0: \Omega \rightarrow \R^N$ we are interested in the parabolic
$p$-Laplace system
\begin{alignat}{2}
  \label{eq:heat2}
  \begin{aligned}
    \partial_t \bfu
    -\Div \big(\bfS(\nabla \bfu)\big)&=\bff&\qquad
    &\text{in $Q$,}
    \\
    \bfu&=0& &\text{on $I\times\partial\Omega$,}
    \\
    \bfu(0,\cdot) &=\bfu_0 &\qquad& \text{in $\Omega$}
  \end{aligned}
\end{alignat}
with $\kappa\geq0$ and $p\in(1,\infty)$, where
\begin{align}
  \label{eq:def-S}
  \bfS(\nabla \bfu) := (\kappa+|\nabla\bfu|)^{p-2}\nabla\bfu.
\end{align}
We will also later need 
\begin{align} \label{eq:def-V}
\bfV(\nabla \bfu) := (\kappa+|\nabla\bfu|)^\frac{p-2}{2}\nabla\bfu.
\end{align}
It is easy to see that both $\bfS$ and $\bfV$ are monotone and invertible.

As usual we use the following notion of weak solutions.
\begin{definition}
  \label{def:weak}
  Assume that $\bff \in L^1(Q)$ and
  $\bfu_0 \in L^1(\Omega) $.  We call
  $$\bfu\in C(\overline{I};L^2(\Omega))\cap
  L^p(I;W^{1,p}_0(\Omega)) $$ a weak solution to
  \eqref{eq:heat2} if
  \begin{align}
    \label{eq:heatweak1}
    \int_\Omega\bfu(t)\cdot\bfxi\dx
    - \int_\Omega
      \bfu_0\cdot\bfxi\dx+\int_0^t\!\!\!\int_\Omega
      \bfS(\nabla\bfu):\nabla\bfxi\dx \dsigma = \int_0^t\!\!\!\int_\Omega \bff
      \cdot \bfxi \dx \dsigma
  \end{align}
  for all $\bfxi\in C_0^\infty(\Omega)$ and all $t\in I$. 
\end{definition}

The formulation in \eqref{eq:heatweak1} is equivalent to 
\begin{equation}
  \begin{aligned}
    \label{eq:heatweak1''}
    - \int_Q\bfu\cdot\partial_t\bfxi\dx \dt - \int_\Omega \bfu_0\cdot\bfxi(0)\dx +\int_Q
    \bfS(\nabla\bfu):\nabla\bfxi\dx \dt
    = \int_Q \bff\cdot\bfxi\dx
    \dt
  \end{aligned}
\end{equation}
for all $\bfxi\in C_0^\infty([0,T)\times\Omega)$.
It is well known that a weak solution exists provided $\bff \in L^{p'}(I;W^{-1,p'}_0(\Omega))$
and  $\bfu_0 \in  L^2(\Omega) $.

The following regularity result is a special case of \cite[Thm. 4.1]{BreMen18pre} (note that the second inclusion in \eqref{eq:quasispace2} is not explicitly stated in \cite{BreMen18pre} but follows directly from the proof).

\begin{theorem}
  \label{thm:quasispace}
  Let  $\alpha_x,\alpha_t \in (0,1]$ be given and let $\Omega$ be a bounded $C^{1,\alpha_x}$-domain. 
  Let $\bfu$ be the unique weak solution to \eqref{eq:heat} in the sense of Definition \ref{def:weak} with 
  \begin{align}\label{ass:f}
    \bff\in L^{p'}(I;N^{\alpha_x,p'}(\Omega))\cap N^{\alpha_t,2}\big(I;L^2(\Omega)),\\ \bfu_0\in N^{\alpha_x,2}(\Omega),\quad\Div\bfS(\nabla\bfu_0)\in L^2(\Omega).\label{ass:u0}
  \end{align}
  Then we have
  \begin{align}
    \label{eq:quasispace1}
    \bfV(\nabla\bfu)&\in
                      L^2(I;N^{\alpha_x,2}(\Omega))\cap
                      N^{\alpha_t,2}\big(I;L^2(\Omega)),
    \\
    \bfu&\in
          L^\infty(I;N^{\alpha_x,2}(\Omega))\cap
          \mathcal C^{0,\alpha_t}(\overline I;L^2(\Omega)).
          \label{eq:quasispace2}
  \end{align}
\end{theorem}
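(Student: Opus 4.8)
The statement is quoted as a special case of \cite{BreMen18pre}; the plan is to reproduce its difference-quotient argument, now tracked on the fractional (Nikolskii) scale. The whole mechanism rests on the pointwise equivalence
\begin{align*}
  \big(\bfS(\nabla\bfu(\cdot+h))-\bfS(\nabla\bfu)\big):\big(\nabla\bfu(\cdot+h)-\nabla\bfu\big)
  \eqsim \abs{\bfV(\nabla\bfu(\cdot+h))-\bfV(\nabla\bfu)}^2,
\end{align*}
which converts the monotonicity of $\bfS$ into $L^2$-control of differences of $\bfV(\nabla\bfu)$. First I would derive the spatial estimates. Fix $h\in\R^n$ and write $\Delta_h\bfv:=\bfv(\cdot+h)-\bfv(\cdot)$. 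Subtracting \eqref{eq:heatweak1} evaluated at $x$ and at $x+h$ and testing the difference with $\Delta_h\bfu$ (admissible in the interior; near $\partial\Omega$ replaced by tangential differences in a $C^{1,\alpha_x}$-chart), the time term integrates to $\tfrac12\norm{\Delta_h\bfu(t)}_{L^2(\Omega)}^2$ and the elliptic term to a multiple of $\int_0^t\!\!\int_\Omega\abs{\Delta_h\bfV(\nabla\bfu)}^2\dx\ds$, so that
\begin{align*}
  \tfrac12\norm{\Delta_h\bfu(t)}_{L^2(\Omega)}^2
  + \int_0^t\!\!\int_\Omega \abs{\Delta_h\bfV(\nabla\bfu)}^2\dx\ds
  \lesssim \norm{\Delta_h\bfu_0}_{L^2(\Omega)}^2
  + \int_0^t\!\!\int_\Omega \Delta_h\bff\cdot\Delta_h\bfu\dx\ds.
\end{align*}
Using $\bfu_0\in N^{\alpha_x,2}(\Omega)$, $\bff\in L^{p'}(I;N^{\alpha_x,p'}(\Omega))$, Young's inequality and the a priori energy bound for $\bfu$, the right-hand side is $\lesssim\abs{h}^{2\alpha_x}$. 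Dividing by $\abs{h}^{2\alpha_x}$ and taking the supremum over $h$ yields simultaneously the first inclusion in \eqref{eq:quasispace1} and the first inclusion in \eqref{eq:quasispace2}.

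For the temporal regularity of $\bfV(\nabla\bfu)$, fix $\rho>0$ and set $\delta_\rho\bfu(t):=\bfu(t+\rho)-\bfu(t)$. Subtracting \eqref{eq:heatweak1} at times $t$ and $t+\rho$ and testing with $\delta_\rho\bfu$, the same equivalence gives
\begin{align*}
  \norm{\delta_\rho\bfu(t)}_{L^2(\Omega)}^2
  + \int_0^{t}\!\!\int_\Omega \abs{\bfV(\nabla\bfu(s+\rho))-\bfV(\nabla\bfu(s))}^2\dx\ds
  \lesssim \norm{\delta_\rho\bfu(0)}_{L^2(\Omega)}^2 + (\text{forcing}).
\end{align*}
The forcing term is controlled by $\rho^{2\alpha_t}$ through $\bff\in N^{\alpha_t,2}(I;L^2(\Omega))$, while the initial contribution is handled by the compatibility assumption $\Div\bfS(\nabla\bfu_0)\in L^2(\Omega)$: testing the equation at $t=0$ bounds $\partial_t\bfu$ near the initial time and so prevents a loss of time-regularity in the initial layer. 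Dividing by $\rho^{2\alpha_t}$ and taking the supremum over $\rho$ gives the second inclusion in \eqref{eq:quasispace1}; the first term moreover yields $\norm{\bfu(t+\rho)-\bfu(t)}_{L^2(\Omega)}\lesssim\rho^{\alpha_t}$ uniformly in $t$, i.e.\ $\bfu\in\mathcal C^{0,\alpha_t}(\overline I;L^2(\Omega))$, completing \eqref{eq:quasispace2}.

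The \emph{main obstacle} is the boundary regularity. Since $\Omega$ is only $C^{1,\alpha_x}$, the spatial difference quotient $\Delta_h\bfu$ is not an admissible test function near $\partial\Omega$, because the translation destroys the zero boundary condition. One must localise, flatten the boundary, and take \emph{tangential} difference quotients only, recovering the missing normal direction from the equation itself; the commutators generated by the (only $C^{1,\alpha_x}$-regular) change of variables then have to be absorbed, uniformly in $t$. It is precisely the $C^{1,\alpha_x}$-regularity of these charts that caps the attainable spatial smoothness at $\alpha_x$ and forces the estimates onto the Nikolskii rather than the Sobolev scale.
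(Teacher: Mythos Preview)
The paper does not give its own proof of this theorem; it simply quotes it as a special case of \cite[Thm.~4.1]{BreMen18pre} (noting that the $\mathcal C^{0,\alpha_t}$-part of~\eqref{eq:quasispace2} follows directly from that proof). Your sketch is a faithful outline of the difference-quotient mechanism behind that reference: you correctly isolate the equivalence $\big(\bfS(\bfP)-\bfS(\bfQ)\big):(\bfP-\bfQ)\eqsim\abs{\bfV(\bfP)-\bfV(\bfQ)}^2$ as the engine, and you rightly flag the boundary localisation (tangential differences in a $C^{1,\alpha_x}$-chart, recovering the normal direction from the equation) as the real work in the spatial estimate.

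One step deserves more care. In the temporal estimate the initial contribution is $\norm{\delta_\rho\bfu(0)}_{L^2}^2=\norm{\bfu(\rho)-\bfu_0}_{L^2}^2$, and the bound $\lesssim\rho^{2\alpha_t}$ does \emph{not} follow just from ``testing the equation at $t=0$''. What is actually needed is a separate a-priori estimate---formally, testing with $\partial_t^2\bfu$, or rigorously a time-difference of the equation tested with $\partial_t\bfu$---which yields $\partial_t\bfu\in L^\infty(I;L^2(\Omega))$; the compatibility condition $\Div\bfS(\nabla\bfu_0)\in L^2(\Omega)$ enters precisely here, to control $\partial_t\bfu(0)=\Div\bfS(\nabla\bfu_0)+\bff(0)$ in $L^2$. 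Once $\partial_t\bfu\in L^\infty(I;L^2)$ is available, $\norm{\bfu(\rho)-\bfu_0}_{L^2}\lesssim\rho\le\rho^{\alpha_t}$ and the rest of your argument goes through. Without this intermediate step your chain is circular, since the H\"older bound on $\bfu$ in~\eqref{eq:quasispace2} is part of what you are trying to prove.
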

In the case $\alpha_x=\alpha_t=1$ the result from
Theorem~\ref{thm:quasispace} is classical and follows formally by
testing the equations with $\Delta\bfu$ and $\partial_t^2\bfu$ (see,
for instance, \cite{DSS} for an easy proof). See also~\cite{CiaMaz20}
for sharp, regularity results from testing with the $p$-Laplacian.
Results in a similar spirit concerning the fractional differentiability of nonlinear parabolic systems can be found in \cite{DM} and \cite{DMS}. Results concerning the fractional differentiability of related elliptic problems can be found in \cite{AKM}, \cite{DDHSW}, \cite{EF} and \cite{Sa}.

\section{The discrete equation}
\label{sec:discrete-p-heat}

From now on let $\Omega \subset \setR^n$ be a polyhedral domain.  By
$\mathcal{T}_h$ denote a regular partition (triangulation)
of~$\Omega$ (no hanging nodes), which consists of closed $n$-simplices called
\emph{elements}. For each element ($n$-simplex) $T\in \mathcal{T}_h$
we denote by $h_T$ the diameter of $T$, and by $\rho_T$ the supremum
of the diameters of inscribed balls. By $\abs{T}$ we denote the
Lebesgue measure of~$T$. By $\dashint_T g\dx$ we denote the mean value
integral over the set~$T$. We also abbreviate $\mean{g}_T = \dashint_T
g\dx$ for the mean value.

We assume that $\mathcal{T}_h$ is \emph{shape regular}, that is there exists
a constant~$\gamma$ (the shape regularity constant or chunkiness constant) such that
\begin{align}
  \label{eq:nondeg}
  \max_{T \in \mathcal{T}_h} \frac{h_T}{\rho_T} \leq \gamma.
\end{align}
We define the maximal mesh-size by
\begin{align*}
  h &= \max_{T \in \mathcal{T}_h} h_T.
\end{align*}
We assume further that our triangulation is \emph{quasi-uniform}, i.e.
\begin{align}
  \label{eq:quasi-uniform}
  h_T \eqsim h \qquad \text{for all $T \in \mathcal{T}_h$}.
\end{align}
For $T \in \mathcal{T}_h$ we define the set of neighbors $\omega_T$,
which consists of all elements~$T' \in \mathcal{T}_h$ with $T \cap T'
\neq \emptyset$. We define
\begin{align*}
  \Omega(\omega_T) &:= \Big(\bigcup \set{T\,:\, T\in
                     \omega_T}\Big)^\circ.
\end{align*}
We also assume that~$\Omega(\omega_T)$ is a connected domain for
each~$T$. This only excludes some strange triangulations and is only a
small technicality.

To simplify notations we will use ambiguously $\omega_T$ instead
of~$\Omega(\omega_T)$ for the domain for integrals.

It is easy to see that the shape regularity of~$\mathcal{T}_h$ implies
the following properties, where the constants are independent of $h$:
\begin{enumerate}
\item \label{mesh:SK} $\abs{\omega_T} \eqsim \abs{T}$ for all
  $T \in \mathcal{T}_h$.
\item \label{mesh:NK} There exists $m_1 \in \setN$ such that
  $\# \omega_T \leq m_1$ for all $T \in \mathcal{T}_h$.
\end{enumerate}

For $\ell\in \setN _0$ we denote by $\mathscr{P}_\ell(\Omega)$ the
polynomials on $\Omega$ of degree less than or equal to
$\ell$. Moreover, we set $\mathscr{P}_{-1}(\Omega):=\set {0}$.

For fixed $r \in \setN$ we define the the finite element space $V_h$
as
\begin{align}\label{def:Vh}
  \begin{aligned}
    V_h &:= \set{v \in (W^{1,1}_0(\Omega))^N\,:\, v|_T \in (\mathscr{P}_r(T))^N
      \,\,\forall T\in \mathcal{T}_h}.
  \end{aligned}
\end{align}
\begin{remark}
  For the numerical analysis it is only important that~$V_h$ contains
  all continuous, locally linear functions, and that the functions are
  locally polynomials of a fixed maximal degree. Thus it would for
  example also be possible to use velocity spaces like the
  MINI-element (locally linear functions enriched by bubble
  functions. This might be of interest if our results should be
  applied to the corresponding fluid system.
\end{remark}
  
Let $\{0=t_0<\cdots<t_M=T\}$ be a uniform partition of $[0,T]$ with
mesh size $\tau =T/M$. For $m \geq 1$ define $I_m := [t_{m-1},t_m]$
and $J_m := [t_{m-1},t_{m+1}]$.

For a discrete sequence~$a_m$ we define the backwards-in-time discrete
time derivative~$d_t$ by
\begin{align*}
  d_t a_m := \tau^{-1}(a_m - a_{m-1}).
\end{align*}
Then
\begin{align}
  \label{eq:dtaa}
  d_t a_m \cdot a_m &= \tfrac 12 d_t \abs{a_m}^2 + \tfrac{\tau}{2}
                      \abs{d_t a_m}^2.
\end{align}
Let $\bfu_{0,h}:= \Pi_2\bfu_0$, where $\Pi_2$ is the $L^2$-projection
to~$V_h$. Now for some given time-discrete force $\bff_m$, we define
$\bfu_{m,h}$ as the solution of the implicit Euler scheme
\begin{align}
  \label{eq:tdiscr-pre}
  d_t \bfu_{m,h} - \divergence \big(\bfS(\nabla\bfu_{m,h})\big)
  &=
    \bff_m
    \qquad
    \text{in $V_h^*$}
\end{align}
in the discrete weak sense, i.e.\ for all $\bfxi_h\in V_h$ and
$m =1,\dots, M$ it holds
\begin{align}
  \label{eq:tdiscr}
  \int_\Omega d_t \bfu_{m,h}\cdot\bfxi_h \dx +
  \int_\Omega\bfS(\nabla\bfu_{m,h}):\nabla\bfxi_h \dx
  &=\int_\Omega\bff_{m}\cdot\bfxi_h \dx.
\end{align}
Notice that we discretize in space and time simultaneously and avoid
an intermediate step with only time or only space discretization. This
has the advantage that we do not need to derive regularity properties
of additional intermediate problems.

Let us compare our discrete equation to the continuous one. We start
with the time steps~$m \geq 2$.  We first take the average over
$(s-\tau,s)$
\begin{align*}
  \frac{u(s)-u(s-\tau)}{\tau} - \dashint_{(s-\tau,s)} \divergence
  \big(\bfS(\nabla \bfu)(\sigma)\big)\,d\sigma
  &=  \dashint_{(s-\tau,s)} \bff(\sigma)\,d\sigma.
\end{align*}
Now, take the mean value over~$J_m$ with respect to~$s$.  Then
\begin{align*}
  d_t \mean{\bfu}_{J_m} - \dashint_{J_m} \dashint_{(s-\tau,s)} \divergence
  \big(\bfS(\nabla \bfu)(\sigma)\big)\,d\sigma\,ds
  &= \dashint_{J_m}
    \dashint_{(s-\tau,s)} \bff(\sigma)\,d\sigma\,ds.
\end{align*}
We define~$\theta_m\,:\, [0,\infty) \to [0,\infty)$ for $m=2,\ldots,M$ by
\begin{align}
  \label{eq:Theta-m}
  \begin{aligned}
  \theta_m(\sigma)
  &:=
    \dashint_\sigma^{\sigma+\tau}
    \frac{\indicator_{J_m}(s)}{2\tau}\,ds
    = \frac 1{2\tau^2}
    \int_{\max \set{\sigma,t_{m-1}}}^{\min \set{\sigma+\tau,t_{m+1}}}
    \,ds
  \\
  &= \frac{(\sigma - t_{m-2}) \indicator_{I_{m-1}} (\sigma)
    + \indicator_{I_m}(\sigma) + (t_{m+1} - \sigma)
    \indicator_{I_{m+1}} (\sigma)}{2 \tau^2}.
  \end{aligned}
\end{align}
Then $\theta_m$ is a weight with total mass one. Let us define
weighted averages by
\begin{align*}
  \mean{\bfg}_{\theta_m} &:= \int_\mathbb{R} \theta_m(s) \bfg(s) \,ds.
\end{align*}
We obtain for $m \geq 2$
\begin{align}
  \label{eq:mean-u-step-later}
  d_t \mean{\bfu}_{J_m} - \divergence
  \big(\mean{\bfS(\nabla \bfu)}_{\theta_m}\big)  &= \mean{\bff}_{\theta_m}.
\end{align}
For $m=1$ we have to proceed slightly differently. We start with
our equation~\eqref{eq:heat2}, take the integral over $(0,s)$ and
divide by~$\tau$ to obtain
\begin{align*}
  \frac{\bfu(s)-\bfu_0}{\tau}- \tau^{-1} \int_{(0,s)} \divergence
  \big(\bfS(\nabla \bfu)(\sigma)\big)\,d\sigma &=  \tau^{-1} \int_{(0,s)}
                                         \bff(\sigma)\,d\sigma. 
\end{align*}
Now, we take the mean value over $J_1$ with respect to~$s$
and obtain
\begin{align*}
  \frac{\mean{\bfu}_{J_1} - \bfu_0}{\tau} - \dashint_{J_1} \tau^{-1} \int_0^s \divergence
  \big(\bfS(\nabla \bfu)(\sigma)\big)\,d\sigma\,ds
  &= \dashint_{J_1} \tau^{-1} \int_0^s \bff(\sigma)\,d\sigma\,ds,
\end{align*}
Let us define the weight $\theta_1 : [0,\infty) \to [0,\infty)$ by
\begin{align} \label{eq:Theta-1} \theta_1(\sigma) := \frac{1}{2\tau^2}
  \int_\sigma^{\infty} \indicator_{J_1}(s) \ds = \frac{2\tau -
    \sigma}{2\tau^2} \indicator_{J_1}(\sigma).
\end{align}
Then $\theta_1$ has total mass one and we may write
\begin{align*}
  \frac{\mean{\bfu}_{J_1} - \bfu_0}{\tau} - \divergence
  \big(\mean{\bfS(\nabla \bfu)}_{\theta_1}\big)
  &=   \mean{\bff}_{\theta_1}.
\end{align*}
With $\mean{\bfu}_{J_0}:= \bfu_0$ the initial equation is now given by
\begin{align} \label{eq:mean-u-step-1}
  d_t \mean{\bfu}_{J_1} - \divergence
  \big(\mean{\bfS(\nabla \bfu)}_{\theta_1}\big)
  &=   \mean{\bff}_{\theta_1}.
\end{align}

\section{Projections operators}
\label{sec:proj-p-lapl}

In this section we consider projections onto the finite element space
$V_h$ introduced in the previous section. In particular, we recall
some known properties of the Scott-Zhang interpolation operator and
prove a gradient estimate for the error of the $L^2$-orthogonal
projection $\Pi_2$ in Theorem~\ref{thm:Vstab-L2}. The latter one is
crucial for the error analysis in the subsequent section.

Let $\PiSZone\,:\, W^{1,1}_0(\Omega) \to V_h$ denote the standard
Scott-Zhang interpolation operator~\cite{ScoZha90} that preserves zero
boundary values, where the values of $\PiSZone \bfv$
on~$\partial \Omega$ are obtained by averaging over edges
in~$\partial \Omega$. Then this operator is stable in~$W^{1,1}$ but
unfortunately not in~$L^1$ or~$L^2$. Therefore, we use slight variant
of the Scott-Zhang operator: Given a function~$\bfv \in L^1(\Omega)$
we extend it by zero outside of~$\Omega$ on an additional layer of
triangles. Now, we take the Scott-Zhang operator that averages only
over $n$-simplices. At the boundary the $n$-simplices, where the
average is calculated, are chosen to lie outside of~$\Omega$. In such
a way we obtain zero boundary values and preserve the~$L^1$-stability,
see the remark after (4.6) in \cite{ScoZha90}. Note that~$\PiSZzero$
does not preserve general polynomial boundary data.

These operators have the following nice properties:
\begin{enumerate}
\item (\textbf{Projection}) $\PiSZzero$ and $\PiSZone$ are linear
  projections onto~$V_h$.
\item \textbf{(Local Stability)} There holds uniformly in $T \in
  \mathcal{T}_h$ 
  \begin{align}
    \label{eq:stab}
    \begin{alignedat}{2}
      \dashint_T \abs{\PiSZzero \bfv}\dx &\lesssim \dashint_{\omega_T}
      \abs{ \bfv}\dx &\qquad &\text{for $\bfv \in L^1(\Omega)$},
      \\
      \dashint_T \abs{\PiSZone \bfv}\dx &\lesssim \dashint_{\omega_T}
      \abs{ \bfv}\dx + \dashint_{\omega_T} h_T \abs{ \nabla \bfv}\dx
      &\qquad &\text{for $\bfv \in W^{1,1}_0(\Omega)$}.
    \end{alignedat}
  \end{align}
\end{enumerate}
It is well-known that these properties imply the following $L^p$
stability results for $1 \leq p < \infty$, e.g. \cite{DR},
\begin{align}
  \label{eq:stabp}
  \begin{aligned}
    \Bigg(\dashint_T \abs{\PiSZzero \bfv}^p\dx\Bigg)^{\frac 1p}
    &\lesssim \Bigg(\, \dashint_{\omega_T}
    \abs{ \bfv}^p \dx \Bigg)^{\frac 1p}
    \\
    \Bigg(\dashint_T \abs{\PiSZone \bfv}^p\dx \Bigg)^{\frac 1p} &\lesssim \Bigg(\,\dashint_{\omega_T}
    \abs{ \bfv}^p \dx \Bigg)^{\frac 1p} + \Bigg(\,\dashint_{\omega_T} h_T^p
    \abs{ \nabla \bfv}^p\dx \Bigg)^{\frac 1p}.
  \end{aligned}
\end{align}
For $p=\infty$ the mean value integrals have to be exchange by maxima.

The following local estimate has been shown by Diening and~\Ruzicka{} in~\cite[Thm. 5.7]{DR}.
\begin{proposition}
  \label{pro:app_V}
  For all $\bfv \in W^{1,p}(\Omega)$ and all
  $T \in \mathcal{T}_h$ it holds that
  \begin{align}
    \label{eq:app_V1}
    \begin{aligned}
      \dashint_T \bigabs{\bfV (\nabla \bfv) - \bfV (\nabla \PiSZone
        \bfv)}^2 \dx &\lesssim
      \inf_{\bfQ\in\mathbb{R}^{N\times
          n}}\dashint_{\omega_T} \bigabs{\bfV(\nabla \bfv) -
        \bfV(\bfQ)}^2 \dx
      \\
      &=
      \dashint_{\omega_T} \bigabs{\bfV(\nabla \bfv) -
        \mean{\bfV(\nabla \bfv)}_{\omega_T}}^2
      \dx.
    \end{aligned}
  \end{align}
  The implicit constant only depends on~$p$ and the
  shape regularity constant~$\gamma$.
\end{proposition}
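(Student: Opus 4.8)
The plan is to reduce the claim to an estimate in the shifted Orlicz scale attached to the $p$-Laplacian. First, the displayed equality in \eqref{eq:app_V1} is for free: since $\bfV$ is invertible, the map $\bfQ\mapsto\bfV(\bfQ)$ is onto $\R^{N\times n}$, and $c\mapsto\dashint_{\omega_T}\abs{\bfg-c}^2\dx$ attains its minimum over constants at $c=\mean{\bfg}_{\omega_T}$; applied to $\bfg=\bfV(\nabla\bfv)$ this gives
\[
  \inf_{\bfQ\in\R^{N\times n}}\dashint_{\omega_T}\bigabs{\bfV(\nabla\bfv)-\bfV(\bfQ)}^2\dx
  =\dashint_{\omega_T}\bigabs{\bfV(\nabla\bfv)-\mean{\bfV(\nabla\bfv)}_{\omega_T}}^2\dx .
\]
So only the inequality remains. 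I introduce the $N$-function $\phi(t):=\int_0^t(\kappa+s)^{p-2}s\ds$ together with its shifts $\phi_a$, which satisfy $\phi_a(t)\eqsim(\kappa+a+t)^{p-2}t^2$, are uniformly $\Delta_2$ in $a\ge0$, and obey the pointwise equivalence $\abs{\bfV(\mathbf a)-\bfV(\mathbf b)}^2\eqsim\phi_{\abs{\mathbf a}}(\abs{\mathbf a-\mathbf b})$. These facts, together with the quasi-triangle inequality and the change-of-shift lemma for the family $\set{\phi_a}$, are the Orlicz tools from the appendix; uniform $\Delta_2$ is what keeps every constant below independent of the shift.

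Next I fix $T$, put $\bfQ:=\mean{\nabla\bfv}_{\omega_T}$ and $a:=\abs{\bfQ}$, and let $\ell_{\bfQ}$ denote the affine map with $\nabla\ell_{\bfQ}=\bfQ$. Writing the error through the shift $\bfQ$ and using the change-of-shift lemma to replace the natural shift $\abs{\nabla\PiSZone\bfv}$ by $a$, followed by the quasi-triangle inequality, I obtain pointwise on $T$
\[
  \bigabs{\bfV(\nabla\bfv)-\bfV(\nabla\PiSZone\bfv)}^2
  \lesssim \phi_a\big(\abs{\nabla\bfv-\bfQ}\big)+\phi_a\big(\abs{\nabla\PiSZone\bfv-\bfQ}\big).
\]
Integrating over $T$ and using $\abs{T}\eqsim\abs{\omega_T}$, the first term is bounded by $\dashint_{\omega_T}\phi_a(\abs{\nabla\bfv-\bfQ})\dx\eqsim\dashint_{\omega_T}\abs{\bfV(\nabla\bfv)-\bfV(\bfQ)}^2\dx$, which is already of the desired form. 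Since $\PiSZone$ reproduces affine functions, $\nabla\PiSZone\bfv-\bfQ=\nabla\PiSZone(\bfv-\ell_{\bfQ})=\nabla\PiSZone\bfw$ with $\bfw:=\bfv-\ell_{\bfQ}$ and $\nabla\bfw=\nabla\bfv-\bfQ$ of mean zero over $\omega_T$, so the whole proof is reduced to the shifted gradient stability
\[
  \dashint_T\phi_a\big(\abs{\nabla\PiSZone\bfw}\big)\dx\lesssim\dashint_{\omega_T}\phi_a\big(\abs{\nabla\bfw}\big)\dx .
\]

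To establish this stability I would subtract the constant $\mean{\bfw}_{\omega_T}$ (reproduced by $\PiSZone$), apply an inverse estimate on $T$ in the Orlicz scale to pass from $\nabla\PiSZone(\bfw-\mean{\bfw}_{\omega_T})$ to $\PiSZone(\bfw-\mean{\bfw}_{\omega_T})$, insert the local $L^\phi$-stability of $\PiSZone$ coming from \eqref{eq:stabp}, and finish with the Orlicz--Poincar\'e inequality on the connected patch $\omega_T$; because $\phi_a$ is uniformly $\Delta_2$, all these steps are uniform in $a$. Feeding this back and using once more the $N$-function equivalence and the best-constant identity of the first paragraph yields
\[
  \dashint_T\bigabs{\bfV(\nabla\bfv)-\bfV(\nabla\PiSZone\bfv)}^2\dx
  \lesssim\dashint_{\omega_T}\bigabs{\bfV(\nabla\bfv)-\mean{\bfV(\nabla\bfv)}_{\omega_T}}^2\dx .
\]

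I expect the main obstacle to be exactly this shifted gradient stability: transferring the plain $L^p$-stability \eqref{eq:stabp} into the shifted Orlicz setting with constants independent of the shift $a$, for which the uniform $\Delta_2$-property of $\set{\phi_a}$ and the Orlicz versions of the inverse and Poincar\'e inequalities are essential. A secondary technicality is the behaviour near $\partial\Omega$, where $\PiSZone$ reproduces only \emph{zero} boundary data rather than general affine functions; boundary elements have to be treated separately using the zero-extension construction of $\PiSZone$ described above.
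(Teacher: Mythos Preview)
The paper does not prove Proposition~\ref{pro:app_V} at all; it merely quotes it as ``shown by Diening and~\Ruzicka{} in~\cite[Thm.~5.7]{DR}''. So there is no in-paper proof to compare against.

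That said, your outline is essentially the strategy of~\cite{DR}: rewrite the $\bfV$-distance via the shifted $N$-function $\phi_a$, freeze the shift at $a=\abs{\mean{\nabla\bfv}_{\omega_T}}$, and reduce everything to the local Orlicz gradient stability $\dashint_T \phi_a(\abs{\nabla\PiSZone\bfw})\dx \lesssim \dashint_{\omega_T}\phi_a(\abs{\nabla\bfw})\dx$ with constants independent of~$a$. The ingredients you list for that stability---an Orlicz inverse estimate on polynomials, the local $L^\phi$-stability of $\PiSZone$ (which in~\cite{DR} is derived directly from the $L^1$-type bound~\eqref{eq:stab} and Jensen, not from~\eqref{eq:stabp}), and an Orlicz--Poincar\'e inequality on the patch---are exactly the ones used there, and the uniform $\Delta_2$-property of $\set{\phi_a}_{a\ge0}$ is indeed what makes all constants shift-independent. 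You have also correctly flagged the one genuine subtlety: for boundary-touching elements $\PiSZone$ does not reproduce arbitrary affine functions, only those compatible with the zero trace, so the ``subtract $\ell_{\bfQ}$'' trick needs a separate argument near~$\partial\Omega$; \cite{DR} handles this by working with a linear polynomial that already vanishes on the relevant boundary face and absorbing the mismatch via the shift-change lemma. Your derivation of the displayed equality (surjectivity of~$\bfV$ plus the $L^2$-best-constant identity) is fine.
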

If follows by a simple application of \Poincare{}'s inequality that
\begin{align}
  \label{eq:PiSZ-local-W12}
  \dashint_T \bigabs{\bfV (\nabla \bfv) - \bfV (\nabla \PiSZone
  \bfv)}^2 \dx &\lesssim h_T^2
                 \dashint_{\omega_T} \bigabs{\nabla{ }\bfV(\nabla \bfv)}^2 \dx
\end{align}
and by summation over all~$T$
\begin{align}
  \label{eq:PiSZ-global-W12}
  \bignorm{\bfV (\nabla \bfv) - \bfV (\nabla \PiSZone
  \bfv)}_{L^2(\Omega)}  &\lesssim h\,
 \bignorm{\nabla \bfV(\nabla \bfv)}_{L^2(\Omega)}.
\end{align}
Let us make a short remark on local estimates in Nikolskii spaces.  For all $g \in N^{\alpha,q}(T)$ with $\alpha \in (0,1]$ and
$q \in [1,\infty)$ it follows by Jensen's inequality and the
definition of $N^{\alpha,q}(T)$ that
\begin{align}
  \label{eq:local-N1}
  \bigg( \dashint_T \abs{g - \mean{g}_T}^q\dx \bigg)^{\frac 1q}
  &\leq \bigg(\frac{1}{\abs{T}^2} \int_{\abs{z} \leq h_T} \int_{T \cap
    (T-z)} \abs{g(y+z)-g(y)}^q \dy \dz \bigg)^{\frac 1q}
  \\
  \label{eq:local-N1b}
  &\lesssim h_T^\alpha
    \frac{\seminorm{g}_{N^{\alpha,q}(T)}}{\abs{T}^{\frac 1 q}}.
\end{align}
It is possible to replace~$T$ by~$\omega_T$.

The next theorem extends~\eqref{eq:PiSZ-local-W12}
and~\eqref{eq:PiSZ-global-W12} to the case of Nikolskii spaces.
\begin{theorem}
  \label{thm:PiSZ-stab}
  Let $\alpha \in (0,1]$. For all ~$T \in \mathcal{T}_h$ it holds that
  \begin{align}
    \label{eq:PiSZ-local-N}
    \left( \dashint_T \bigabs{\bfV (\nabla \bfv) - \bfV (\nabla \PiSZone 
    \bfv)}^2 \dx \right)^\frac{1}{2} &\lesssim h_T^\alpha\,
              \frac{     \left[\bfV(\nabla \bfv)\right]_{N^{\alpha,2}(
                                       \omega_T)}
                                       }{\abs{\omega_T}^{\frac 12}}.
  \end{align}
  Assume additionally that~$\mathcal{T}_h$ is  quasi-uniform. Then we have
  \begin{align}
    \label{eq:PiSZ-global-N}
    \bignorm{\bfV(\nabla \bfv) - \bfV(\nabla \PiSZone \bfv)}_{L^2(\Omega)} \lesssim
    h^{\alpha} [\bfV(\nabla  \bfv)]_{N^{\alpha,2}(\Omega)}.
  \end{align}
\end{theorem}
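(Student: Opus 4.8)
The plan is to obtain the local estimate~\eqref{eq:PiSZ-local-N} by feeding Proposition~\ref{pro:app_V} into the local Nikolskii inequality, and then to derive the global estimate~\eqref{eq:PiSZ-global-N} by a summation argument that interchanges the element sum with the shift integral so that the finite overlap of the patches~$\omega_T$ can be exploited.

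Write $g := \bfV(\nabla \bfv)$. For the local bound, Proposition~\ref{pro:app_V} gives
\begin{align*}
  \dashint_T \bigabs{\bfV(\nabla \bfv) - \bfV(\nabla \PiSZone \bfv)}^2 \dx
  \lesssim \dashint_{\omega_T} \bigabs{g - \mean{g}_{\omega_T}}^2 \dx,
\end{align*}
and the local Nikolskii estimate~\eqref{eq:local-N1b}, applied with $q = 2$ and with $T$ replaced by $\omega_T$ (legitimate since $\mathrm{diam}\,\omega_T \lesssim h_T$ by shape regularity), bounds the right-hand side by $h_T^{2\alpha}\,\abs{\omega_T}^{-1}\,\seminorm{g}_{N^{\alpha,2}(\omega_T)}^2$. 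This is precisely~\eqref{eq:PiSZ-local-N}.

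For the global estimate, the tempting route---squaring~\eqref{eq:PiSZ-local-N}, multiplying by $\abs{T}$ and summing---fails, because the Nikolskii seminorm is a supremum over shifts and hence is not additive over the patches $\omega_T$. Instead I would return to the integrated difference-quotient form~\eqref{eq:local-N1} \emph{before} summing. Combining it with Proposition~\ref{pro:app_V} and using $\abs{T} \eqsim \abs{\omega_T} \eqsim h^n$ from quasi-uniformity, each element contributes
\begin{align*}
  \int_T \bigabs{\bfV(\nabla \bfv) - \bfV(\nabla \PiSZone \bfv)}^2 \dx
  \lesssim h^{-n} \int_{\abs{z} \leq c\,h} \int_{\omega_T \cap (\omega_T - z)} \abs{g(y+z)-g(y)}^2 \dy \dz,
\end{align*}
where the constant $c$ absorbs $\mathrm{diam}\,\omega_T \lesssim h_T \eqsim h$. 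Summing over $T$, the decisive point is that each point $y$ belongs to only a bounded number of patches (property~\ref{mesh:NK}), so $\sum_T \indicator_{\omega_T \cap (\omega_T - z)}(y) \lesssim \indicator_{\Omega \cap (\Omega - z)}(y)$ uniformly in $z$; the inner sum therefore collapses to a single integral over $\Omega \cap (\Omega - z)$, which by definition of the Nikolskii seminorm is at most $\abs{z}^{2\alpha}\seminorm{g}_{N^{\alpha,2}(\Omega)}^2 \leq (c\,h)^{2\alpha}\seminorm{g}_{N^{\alpha,2}(\Omega)}^2$. Since $\int_{\abs{z} \leq c\,h} \dz \eqsim h^n$, the factors $h^{-n}$ and $h^n$ cancel and one arrives at the asserted bound $h^{2\alpha}\seminorm{g}_{N^{\alpha,2}(\Omega)}^2$, which is~\eqref{eq:PiSZ-global-N}.

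The step I expect to be the main obstacle is exactly this passage from the patches to the global domain: one must avoid summing the non-additive local seminorms and instead interchange the element summation with the $z$-integration so that the finite-overlap bound applies to a fixed shift. Quasi-uniformity is indispensable here, as it renders the shift range $\abs{z} \lesssim h_T$ uniform across all elements and thereby decouples the $z$-integral from any element-dependent scaling.
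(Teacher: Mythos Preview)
Your proposal is correct and follows essentially the same approach as the paper: the local estimate comes directly from Proposition~\ref{pro:app_V} combined with~\eqref{eq:local-N1b}, and for the global estimate the paper likewise goes back to the integrated difference-quotient form~\eqref{eq:local-N1}, sums over~$T$, exploits finite overlap to collapse the patch integrals into a single integral over $\Omega\cap(\Omega-z)$, and then invokes the Nikolskii seminorm. Your discussion of why naive summation of the local seminorms fails, and of the role of quasi-uniformity in making the shift range element-independent, is more explicit than the paper's but captures exactly the same mechanism.
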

\begin{proof}
  Estimate~\eqref{eq:PiSZ-local-N} follows directly from
  Proposition~\ref{pro:app_V} and~\eqref{eq:local-N1b}. Similarly
  with~\eqref{eq:local-N1} we obtain
  \begin{align*}
    \lefteqn{\bignorm{\bfV(\nabla \bfv) - \bfV(\nabla \PiSZone \bfv)}_{L^2(\Omega)}^2}
    \qquad &
    \\
           &\lesssim \sum_{T \in \mathcal{T}_h}       \int_{\omega_T} \bigabs{\bfV(\nabla \bfv) -
             \mean{\bfV(\nabla \bfv)}_{\omega_T}}^2
             \dx
    \\
           &\lesssim
             \frac{1}{h^n} \int_{\abs{z} \leq h} \int_{\Omega \cap
             (\Omega-z)} \abs{\bfV(\nabla \bfv)(y+z)-\bfV(\nabla
             \bfv)(y)}^2 \dy  \dz 
    \\
           &\lesssim
             \big(h^{\alpha} [\bfV(\nabla
             \bfv)]_{N^{\alpha,2}(\Omega)} \big)^2.
  \end{align*}
  This proves~\eqref{eq:PiSZ-global-N}.
\end{proof}
\begin{lemma}
  \label{lem:PiSZ-L2-stab}
  Let $\alpha \in (0,1]$. For all ~$T \in \mathcal{T}_h$ it holds that
  \begin{align*}
    \norm{\bfv - \PiSZzero \bfv}_{L^2(\Omega)} \lesssim
    h^{\alpha} [\bfv]_{N^{\alpha,2}(\Omega)}.
  \end{align*}
  
\end{lemma}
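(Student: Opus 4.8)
The plan is to mirror the proof of Theorem~\ref{thm:PiSZ-stab}, replacing the gradient-type local estimate of Proposition~\ref{pro:app_V} by a zeroth-order one built directly from the local $L^2$-stability of $\PiSZzero$. First I would fix $T \in \mathcal{T}_h$ and exploit that $\PiSZzero$ is a projection onto $V_h$ which reproduces constants on interior patches. Setting $c_T := \mean{\bfv}_{\omega_T}$ and using $\PiSZzero c_T = c_T$ on $T$ together with the triangle inequality, the $p=2$ case of the local stability~\eqref{eq:stabp}, and $\abs{T}\eqsim\abs{\omega_T}$, I obtain
\begin{align*}
  \Big(\dashint_T \abs{\bfv - \PiSZzero \bfv}^2 \dx\Big)^{\frac12}
  &= \Big(\dashint_T \bigabs{(\bfv - c_T) - \PiSZzero(\bfv - c_T)}^2 \dx\Big)^{\frac12}
  \\
  &\lesssim \Big(\dashint_{\omega_T} \abs{\bfv - c_T}^2 \dx\Big)^{\frac12}.
\end{align*}
Applying the local Nikolskii oscillation bound~\eqref{eq:local-N1}--\eqref{eq:local-N1b} on $\omega_T$ (using the remark that $T$ may be replaced by $\omega_T$) then yields the local estimate $\big(\dashint_T \abs{\bfv - \PiSZzero\bfv}^2\dx\big)^{\frac12} \lesssim h_T^\alpha\, [\bfv]_{N^{\alpha,2}(\omega_T)}/\abs{\omega_T}^{\frac12}$.

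The second step is the summation. Multiplying by $\abs{T}\eqsim\abs{\omega_T}$, summing over all $T$, and using quasi-uniformity $h_T\eqsim h$ together with the finite overlap of the patches $\omega_T$, I would reduce matters to controlling $\sum_T \int_{\omega_T}\abs{\bfv-\mean{\bfv}_{\omega_T}}^2\dx$. Exactly as in the proof of Theorem~\ref{thm:PiSZ-stab}, Jensen's inequality turns each summand into a double integral over $\omega_T\times\omega_T$; since $\operatorname{diam}\omega_T\lesssim h$ and $\abs{\omega_T}\eqsim h^n$, the substitution $z=x-y$ and the finite overlap give $\sum_T \int_{\omega_T}\abs{\bfv-\mean{\bfv}_{\omega_T}}^2\dx \lesssim h^{-n}\int_{\abs{z}\le h}\int_{\Omega\cap(\Omega-z)}\abs{\bfv(y+z)-\bfv(y)}^2\dy\dz$. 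The definition of the Nikolskii seminorm bounds the inner integral by $\abs{z}^{2\alpha}[\bfv]_{N^{\alpha,2}(\Omega)}^2$, and integrating $\abs{z}^{2\alpha}$ over $\abs{z}\le h$ produces the factor $h^{n+2\alpha}$, leaving the claimed $h^{2\alpha}[\bfv]_{N^{\alpha,2}(\Omega)}^2$.

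The main obstacle is the behaviour near $\partial\Omega$: because $\PiSZzero$ enforces zero boundary values it does \emph{not} reproduce the constant $c_T$ on boundary elements, so the identity in the first display fails there (indeed the estimate forces $\bfv$ to have vanishing trace, as holds in all our applications where $\PiSZzero$ acts on functions in $W^{1,p}_0(\Omega)$). I would resolve this using the very construction of $\PiSZzero$: extending $\bfv$ by zero onto the additional layer of simplices realizes $\PiSZzero\bfv$ as the restriction of a genuine Scott-Zhang operator $\tilde\Pi$ on the enlarged mesh $\tilde{\mathcal T}_h$ covering $\tilde\Omega\supset\Omega$, and $\tilde\Pi$ reproduces constants on \emph{every} patch. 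The first step then holds verbatim with $\bfv$ replaced by its zero extension $\tilde\bfv$ and $\omega_T$ by the enlarged patch $\tilde\omega_T$. For $\bfv$ with zero trace the zero extension satisfies $[\tilde\bfv]_{N^{\alpha,2}(\tilde\Omega)}\lesssim[\bfv]_{N^{\alpha,2}(\Omega)}$ — the only point at which the boundary values enter — so the summation step carried out on $\tilde\Omega$ returns the asserted global bound on $\Omega$.
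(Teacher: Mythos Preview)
Your core argument is exactly the paper's: on each $T$ subtract $c_T=\mean{\bfv}_{\omega_T}$, invoke the local $L^2$-stability~\eqref{eq:stabp} of $\PiSZzero$ to reduce $\int_T|\bfv-\PiSZzero\bfv|^2\dx$ to $\int_{\omega_T}|\bfv-c_T|^2\dx$, and then sum via the same Nikolskii double-integral bound as in the proof of~\eqref{eq:PiSZ-global-N}. The paper writes this in two lines, splitting $\bfv-\PiSZzero\bfv$ on $T$ as $(\bfv-c_T)-\PiSZzero(\bfv-c_T)$ and appealing to the ``projection property'' and~\eqref{eq:stabp} without further comment.

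You go beyond the paper in isolating the boundary obstruction, and you are right to: since $\PiSZzero$ forces zero boundary values it does not reproduce constants on boundary elements, and indeed the lemma as written is false for general $\bfv\in N^{\alpha,2}(\Omega)$ --- take $\bfv\equiv 1$, for which the right-hand side vanishes but the left does not. So a zero-trace hypothesis is genuinely needed (and is available in the only application, the bound on $K_3$ in the proof of Theorem~\ref{thm:main}, where $\bfv=\bfu(s)\in W^{1,p}_0(\Omega)$). Your remedy via the zero extension on the enlarged mesh is the natural one; the assertion you leave unproven, $[\tilde\bfv]_{N^{\alpha,2}(\tilde\Omega)}\lesssim[\bfv]_{N^{\alpha,2}(\Omega)}$, is however a fractional Hardy-type inequality --- equivalent to $\int_{\{\operatorname{dist}(x,\partial\Omega)<t\}}|\bfv|^2\dx\lesssim t^{2\alpha}[\bfv]_{N^{\alpha,2}(\Omega)}^2$ --- which holds on Lipschitz domains for functions in the closure of $C_0^\infty(\Omega)$ but is not a one-liner and deserves either a reference or a short separate argument.
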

\begin{proof}
  Arguing similarly as in the proof of \eqref{eq:PiSZ-global-N} we
  have for all $\bfv\in N^{\alpha,p}(\Omega)$
  \begin{align*}
    \nonumber
    \int_\Omega \left\vert  \bfv-\PiSZzero\bfv\right\vert^2\dx
    &\lesssim\sum_{T\in\mathcal T_h}\int_T \left\vert
      \bfv-\langle\bfv\rangle_{\omega_T}\right\vert^2\dx+\sum_{T\in\mathcal
      T_h}\int_T \left\vert
      \PiSZzero\big(\bfv-\langle\bfv\rangle_{\omega_T}\big)\right\vert^2\dx
    \\
    &\lesssim\sum_{T\in\mathcal T_h}\int_{\omega_T} \left\vert  \bfv-\langle\bfv\rangle_{\omega_T}\right\vert^2\dx\lesssim
      h^{2\alpha}  \left[
      \bfv\right]^2_{N^{\alpha,2}(\Omega)}
  \end{align*}
  using the projection property and the local stability
  estimate~\eqref{eq:stabp}.
\end{proof}
Although the Scott-Zhang operator has wonderful properties it is not
always the best choice for parabolic problems. In particular, the lack
of self-adjointness makes serious problems with the discretization of
term~$\partial_t \bfu$. For the latter one it is much better to use
the~$L^2$-projection $\Pi_2\,:\, L^2(\Omega) \to V_h$.

In fact, we will later use the following
identity for the error~$\bfe_m$ (see Section~\ref{sec:error} for the
exact definition of the error~$\bfe_m$)
\begin{align}
  \label{eq:identity-aux1}
  \begin{aligned}
  \int_\Omega d_t \bfe_m \cdot \Pi_2 &\bfe_m \dx 
  = \int_\Omega d_t \Pi_2 \bfe_m \cdot \Pi_2 \bfe_m \dx\\&
    = \frac{1}{2} d_t \norm{\Pi_2 \bfe_m}_{L^2(\Omega)}^2 + \frac{\tau}{2}
    \norm{d_t \Pi_2 \bfe_m}_{L^2(\Omega)}^2. 
    \end{aligned}
\end{align}
This important identity relies strongly on the self-adjointness
of~$\Pi_2$, which is not available for~$\PiSZone$. This was the reason
for the $h$ and $\tau$ coupling in previous papers.

In the following we will extend~\eqref{eq:PiSZ-global-W12} to the
$L^2$-projection~$\Pi_2$. In particular, we want to prove the
following theorem.
\begin{theorem}
  \label{thm:Vstab-L2}
  Let $\mathcal{T}_h$ be quasi-uniform and $\alpha \in (0,1]$. Then
  \begin{align*}
    \norm{\bfV(\nabla \bfv) - \bfV(\nabla \Pi_2 \bfv))}_{L^2(\Omega)}
    \lesssim h^{\alpha} [\bfV(\nabla \bfv)]_{N^{\alpha,2}(\Omega)}. 
  \end{align*}
\end{theorem}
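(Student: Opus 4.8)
The plan is to use the Scott--Zhang estimate~\eqref{eq:PiSZ-global-N} as a stepping stone and to measure the distance between the two projections through the nonlocal behaviour of $\Pi_2$. By the triangle inequality,
\[
  \norm{\bfV(\nabla \bfv) - \bfV(\nabla \Pi_2 \bfv)}_{L^2(\Omega)}
  \le \norm{\bfV(\nabla \bfv) - \bfV(\nabla \PiSZone \bfv)}_{L^2(\Omega)}
  + \norm{\bfV(\nabla \PiSZone \bfv) - \bfV(\nabla \Pi_2 \bfv)}_{L^2(\Omega)},
\]
and the first term is already bounded by $h^\alpha [\bfV(\nabla \bfv)]_{N^{\alpha,2}(\Omega)}$ via Theorem~\ref{thm:PiSZ-stab}. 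It remains to control the $\bfV$-distance between the two discrete functions $\PiSZone \bfv$ and $\Pi_2 \bfv$. Setting $\bfg := \bfv - \PiSZone \bfv$ and using that $\Pi_2$ is a projection onto $V_h$ while $\PiSZone \bfv \in V_h$, we obtain the key identity $\Pi_2 \bfv - \PiSZone \bfv = \Pi_2 \bfg$, so the remaining term only sees the correction $\Pi_2 \bfg$.

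Next I would pass to the $N$-function formulation. Writing $\phi_a$ for the shifted $N$-function associated with $\bfV$, so that $\abs{\bfV(\bfxi)-\bfV(\bfQ)}^2 \eqsim \phi_{\abs{\bfxi}}(\abs{\bfxi-\bfQ})$ (see the appendix), the remaining term is comparable to $\int_\Omega \phi_{\abs{\nabla \PiSZone \bfv}}\big(\abs{\nabla \Pi_2 \bfg}\big)\dx$. On each element $T$ the function $\Pi_2 \bfg$ is a polynomial, so the local inverse estimate $\norm{\nabla \Pi_2 \bfg}_{L^2(T)} \lesssim h_T^{-1}\norm{\Pi_2 \bfg}_{L^2(T)}$ holds; together with the quasi-uniformity $h_T \eqsim h$ and the $\Delta_2$-property of $\phi_a$ this reduces matters to a weighted quantity of the form $\int_\Omega \phi_{\abs{\nabla \PiSZone \bfv}}\big(h^{-1}\abs{\Pi_2 \bfg}\big)\dx$, i.e.\ to controlling the nonlocal correction $\Pi_2 \bfg$ measured against the piecewise shift $\abs{\nabla \PiSZone \bfv}$.

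The heart of the argument is to replace $\Pi_2 \bfg$ by $\bfg$ in this weighted quantity, for which I would invoke the decay estimates of Eriksson--Johnson~\cite{ErissonJohnson1995} and Boman~\cite{Boman}. On quasi-uniform meshes these furnish a weighted $L^2$-stability of $\Pi_2$ of the form $\int_\Omega \abs{\Pi_2 \psi}^2 \omega \dx \lesssim \int_\Omega \abs{\psi}^2 \omega \dx$ for suitable (exponentially localized, resp.\ Muckenhoupt) weights $\omega$, equivalently an exponential decay of the element-to-element blocks of $\Pi_2$. Combined with a change-of-shift lemma for $\phi_a$, which allows one to trade the shift $\abs{\nabla \PiSZone \bfv}$ on a distant element for its value on a nearby element at the price of a controllable additive error, the exponential decay renders the resulting double sum over pairs of elements summable. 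Once $\Pi_2 \bfg$ has thereby been traded for $\bfg = \bfv - \PiSZone \bfv$, treated locally, the term is estimated exactly as in the proof of~\eqref{eq:PiSZ-global-N}, using Proposition~\ref{pro:app_V} and the local Nikolskii estimate~\eqref{eq:local-N1b}, which produces the claimed bound $h^\alpha [\bfV(\nabla \bfv)]_{N^{\alpha,2}(\Omega)}$.

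I expect the main obstacle to be precisely the interaction between the nonlocality of $\Pi_2$ and the spatially varying shift $\abs{\nabla \PiSZone \bfv}$ in the nonlinear quantity $\phi_a$: the decay estimates are genuinely linear weighted-$L^2$ statements, whereas the $\bfV$-distance is nonlinear and shift-dependent, so the bookkeeping that pairs the change-of-shift errors against the exponential weights is the delicate step, and it is exactly here that quasi-uniformity enters in an essential way.
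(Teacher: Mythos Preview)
Your overall architecture---split off the Scott--Zhang term, write $\Pi_2\bfv-\PiSZone\bfv=\Pi_2\bfg$ with $\bfg=\bfv-\PiSZone\bfv$, pass to the shifted $N$-function, and then exploit the exponential localisation of $\Pi_2$ together with a shift-change---is exactly the skeleton of the paper's proof. But two of your intermediate choices create the very obstacle you flag at the end.

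First, the detour through the inverse estimate is unnecessary and does not cleanly give what you claim: the statement $\|\nabla \Pi_2\bfg\|_{L^2(T)}\lesssim h_T^{-1}\|\Pi_2\bfg\|_{L^2(T)}$ is an $L^2$ fact, and $\Delta_2$ alone does not let you replace $\int_T\phi_{a(x)}(|\nabla\Pi_2\bfg|)\dx$ by $\int_T\phi_{a(x)}(h^{-1}|\Pi_2\bfg|)\dx$ when the shift $a(x)$ varies over $T$. Second, and more seriously, the weighted $L^2$-stability $\int|\Pi_2\psi|^2\omega\lesssim\int|\psi|^2\omega$ that you plan to invoke is a linear statement that does not interact with the nonlinear, argument-dependent quantity $\phi_{a(x)}(\,\cdot\,)$; this is precisely the mismatch you diagnose, and your sketch does not indicate how to close it.

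The paper avoids both issues in one stroke. Rather than dropping to the function level and using weighted $L^2$-stability, it uses the \emph{pointwise gradient} decay estimate (Lemma~\ref{lem:L2-decay}, \eqref{eq:L2-decay-W}):
\[
  |\nabla\Pi_2\bfg(x)|\ \lesssim\ \big(\eta_h*(\indicator_\Omega|\nabla\bfg|)\big)(x),
\]
where $\eta_h$ is a probability measure with exponential tails at scale $h$. Since $\eta_h$ has mass one, Jensen's inequality pushes $\phi_{|\nabla\bfv(x)|}$ through the convolution,
\[
  \phi_{|\nabla\bfv(x)|}\big(|\nabla\Pi_2\bfg(x)|\big)
  \ \lesssim\ \int_\Omega \eta_h(x-y)\,\phi_{|\nabla\bfv(x)|}\big(|\nabla\bfg(y)|\big)\dy,
\]
and only \emph{then} does the shift-change lemma enter, swapping the shift from $|\nabla\bfv(x)|$ to $|\nabla\bfv(y)|$ at the cost of the extra term $\int\!\!\int \eta_h(x-y)\,|\bfV(\nabla\bfv(x))-\bfV(\nabla\bfv(y))|^2\dy\dx$. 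The first part is now exactly $\int_\Omega|\bfV(\nabla\bfv)-\bfV(\nabla\PiSZone\bfv)|^2$, handled by Theorem~\ref{thm:PiSZ-stab}; the second part is bounded by $h^{2\alpha}[\bfV(\nabla\bfv)]_{N^{\alpha,2}}^2$ because $\int\eta_h(z)|z|^{2\alpha}\dz\eqsim h^{2\alpha}$. Note also that the paper takes the shift $|\nabla\bfv|$ rather than your $|\nabla\PiSZone\bfv|$, so the shift-change error directly produces the Nikolski\u{\i} oscillation of $\bfV(\nabla\bfv)$ without a further comparison step. In short: keep your decomposition, drop the inverse estimate, and replace the weighted $L^2$-stability by the pointwise gradient decay plus Jensen; the ``delicate bookkeeping'' then disappears.
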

Before we get to the proof of the theorem let us make a few remarks.
The case~$p=2$ reduces to
\begin{align}
  \label{eq:Pi2forp=2}
  \norm{\nabla \bfv - \nabla \Pi_2 \bfv}_{L^2(\Omega)}
  \lesssim h^{\alpha} [\nabla \bfv]_{N^{\alpha,2}(\Omega)}. 
\end{align}
Since~$\mathcal{T}_h$ is quasi-uniform, this special case can be
easily shown with the help of the Scott-Zhang operator. Indeed, using
$\Pi_2 \PiSZone = \PiSZone$, inverse estimates and the approximation
properties of~$\PiSZone$, in particular \eqref{eq:PiSZ-global-N}, we can estimate
\begin{align*}
  \begin{aligned}
    \norm{\nabla \bfv - \nabla \Pi_2 \bfv}_{L^2(\Omega)} &\leq
    \norm{\nabla (\bfv - \PiSZone \bfv)}_{L^2(\Omega)} + \norm{\nabla
      \Pi_2 (\bfv-\PiSZone \bfv)}_{L^2(\Omega)}
    \\
    &\lesssim \norm{\nabla (\bfv - \PiSZone \bfv)}_{L^2(\Omega)} + h^{-1}
    \norm{ \Pi_2 (\bfv-\PiSZone \bfv)}_{L^2(\Omega)}
    \\
    &\lesssim \norm{\nabla (\bfv - \PiSZone \bfv)}_{L^2(\Omega)} + h^{-1}
    \norm{ \bfv-\PiSZone \bfv}_{L^2(\Omega)}
    \\
    &\lesssim \norm{\nabla (\bfv - \PiSZone \bfv)}_{L^2(\Omega)}
    \\
    & \lesssim h^{\alpha} [\nabla \bfv]_{N^{\alpha,2}(\Omega)}.
  \end{aligned}
\end{align*}
However, in the non-linear case~$p \ne 2$, where $\nabla \bfv$ has to
be replaced by~$\bfV(\nabla \bfv)$, such a simple trick is not
possible. To overcome this problem we will
use sophisticated decay estimates of the~$L^2$-projection which are
due to Eriksson and Johnson~\cite{ErissonJohnson1995} and refined by
Boman~\cite{Boman}. In the following we will derive from their results
decay estimates of the $L^2$-projection for our simple situation of
quasi-uniform meshes.

Let us define the mollifier
\begin{align*}
  \eta(x) &:= c_\nu \exp(-\nu\abs{x}),
\end{align*}
with $c_\nu$ such that $\norm{\eta}_{L^1(\Rn)}=1$. Then
$\eta_h(x) := h^{-n} \eta(x/h)$ satisfies
$\norm{\eta_h}_{L^1(\Rn)}=1$ as well.
\begin{lemma}[Decay estimates of the $L^2$-projection]
  \label{lem:L2-decay}
  Let $\mathcal{T}_h$ be quasi-uniform. Then for every $x \in T$ and
  all $\bfv \in L^1(\Omega)$ and all $\bfw \in W^{1,1}(\Omega)$ we
  have
  \begin{align}
    \label{eq:L2-decay-L}
    \abs{(\Pi_2 \bfv)(x)}
    &\lesssim  
       \big(\eta_h * (\indicator_\Omega \abs{\bfv})\big)(x),
    \\
    \label{eq:L2-decay-W}
    \abs{(\nabla \Pi_2 \bfw)(x)}
    &\lesssim  
       \big(\eta_h * (\indicator_\Omega \abs{\nabla \bfw})\big)(x).
  \end{align}
  The implicit constants only depend on~$n$ and the
  shape regularity~$\gamma$.
\end{lemma}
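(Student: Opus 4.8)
The plan is to reduce both estimates to a single structural fact, due to Eriksson and Johnson~\cite{ErissonJohnson1995} and Boman~\cite{Boman}, namely that on a quasi-uniform mesh the $L^2$-projection localizes exponentially. Writing $\Pi_2\bfv = \sum_j c_j \phi_j$ in the nodal basis $\set{\phi_j}$, the coefficient vector is $c = \bfM^{-1}b$ with $\bfM$ the mass matrix and $b_j = \int_\Omega \bfv\,\phi_j\dx$, and the entries of $\bfM^{-1}$ decay like $e^{-\nu\,\mathrm{dist}(x_i,x_j)/h}$ with $\nu$ depending only on $n$, $r$ and $\gamma$. After the standard local norm equivalences this is equivalent to the element-wise bound
\begin{equation*}
  \dashint_T \abs{\Pi_2 \bfv}\dx \lesssim \sum_{T' \in \mathcal{T}_h} e^{-\nu\,\mathrm{dist}(T,T')/h}\,\dashint_{T'} \abs{\bfv}\dx.
  \tag{$\ast$}
\end{equation*}
I would isolate $(\ast)$ as the one genuinely nontrivial ingredient, quote it from the cited works, and fix the value of $\nu$ in the definition of the mollifier $\eta$ accordingly.

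Granting $(\ast)$, estimate~\eqref{eq:L2-decay-L} follows quickly. For $x\in T$ the restriction $(\Pi_2\bfv)|_T$ lies in $\mathscr{P}_r(T)$, so by equivalence of norms on this finite-dimensional space together with scaling, $\abs{(\Pi_2\bfv)(x)} \le \norm{\Pi_2\bfv}_{L^\infty(T)} \lesssim \dashint_T\abs{\Pi_2\bfv}\dx$. Inserting $(\ast)$, using quasi-uniformity to write $\dashint_{T'}\abs{\bfv}\dx \eqsim h^{-n}\int_{T'}\abs{\bfv}\dx$, and comparing $\mathrm{dist}(T,T')$ with $\abs{x-y}$ for $y\in T'$ (they differ by at most a fixed multiple of $h$, absorbed by slightly decreasing $\nu$), the sum over $T'$ passes into the integral
\begin{equation*}
  \abs{(\Pi_2\bfv)(x)} \lesssim \int_\Omega h^{-n} e^{-\nu\abs{x-y}/h}\abs{\bfv(y)}\dy \eqsim \big(\eta_h * (\indicator_\Omega\abs{\bfv})\big)(x),
\end{equation*}
which is~\eqref{eq:L2-decay-L}.

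For the gradient estimate~\eqref{eq:L2-decay-W} I would first apply an inverse inequality on the element $T\ni x$: since $\nabla(\Pi_2\bfw - \bfc) = \nabla\Pi_2\bfw$ for every constant $\bfc\in\R^N$ and $\Pi_2\bfw - \bfc$ is polynomial on $T$, one has $\abs{(\nabla\Pi_2\bfw)(x)} \lesssim h^{-1}\dashint_T\abs{\Pi_2\bfw - \bfc}\dx$. Choosing $\bfc = \mean{\bfw}_{\omega_T}$ and using that $\Pi_2$ reproduces constants, so $\Pi_2\bfw - \bfc = \Pi_2(\bfw - \bfc)$, I apply the already established pointwise bound~\eqref{eq:L2-decay-L} to $\bfw - \bfc$ and then a local \Poincare{} inequality to pass from $\abs{\bfw - \mean{\bfw}_{\omega_T}}$ to $h\abs{\nabla\bfw}$. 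The factor $h$ cancels the $h^{-1}$, and after the same discrete-to-continuous conversion as above (with $\nu$ again slightly reduced) this yields $\abs{(\nabla\Pi_2\bfw)(x)} \lesssim \big(\eta_h * (\indicator_\Omega\abs{\nabla\bfw})\big)(x)$.

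The main obstacle is twofold. The substantive one is $(\ast)$ itself: the exponential decay of the inverse mass matrix is the heart of the matter and is exactly what Eriksson--Johnson and Boman supply, so my argument merely repackages it into pointwise convolution form. The more delicate bookkeeping point is the constant reproduction used for~\eqref{eq:L2-decay-W}: because $V_h$ carries zero boundary values, $\Pi_2\bfc \ne \bfc$ in the $O(h)$ boundary layer, so the identity $\Pi_2\bfw - \bfc = \Pi_2(\bfw-\bfc)$ must be handled carefully near $\partial\Omega$, invoking interior constant reproduction together with the exponential decay of the boundary contribution (equivalently, localizing the constant subtraction to $\omega_T$). Finally, the repeated passage from the discrete weights $e^{-\nu\,\mathrm{dist}(T,T')/h}$ to the smooth kernel $\eta_h$ requires checking that a fixed reduction of $\nu$ at each step keeps all implicit constants independent of $h$.
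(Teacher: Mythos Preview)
Your treatment of~\eqref{eq:L2-decay-L} is essentially the paper's own argument: the element-wise decay $(\ast)$ is just a reformulation of the weighted $L^1$-stability of~$\Pi_2$ that the paper quotes from Boman's Lemma~2.3, and the passage from $\dashint_T$ to the pointwise value and from the discrete sum to the convolution is the same.

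For~\eqref{eq:L2-decay-W}, however, your route differs from the paper's and has a genuine gap. The paper does \emph{not} derive the gradient bound from~\eqref{eq:L2-decay-L}; it quotes a second, independent result (Boman's Lemma~2.5), namely the weighted stability
\[
  \norm{\eta_h(x-\cdot)\,\nabla\Pi_2\bfw}_{L^1(\Omega)}
  \lesssim \norm{\eta_h(x-\cdot)\,\nabla\bfw}_{L^1(\Omega)},
\]
and then proceeds exactly as for~\eqref{eq:L2-decay-L}. Your alternative, subtracting $\bfc=\mean{\bfw}_{\omega_T}$ and applying~\eqref{eq:L2-decay-L} to $\bfw-\bfc$, leads to
\[
  \abs{(\nabla\Pi_2\bfw)(x)}
  \lesssim h^{-1}\int_\Omega \eta_h(x-y)\,\abs{\bfw(y)-\mean{\bfw}_{\omega_T}}\dy.
\]
At this point ``local \Poincare'' does \emph{not} give what you need: the kernel $\eta_h$ has full support, so for $y$ at distance $kh$ from $T$ the integrand sees $\abs{\bfw(y)-\mean{\bfw}_{\omega_T}}$, which is controlled by roughly $kh$ times averages of $\abs{\nabla\bfw}$ along a chain of elements joining $y$ to $\omega_T$, not by $h\abs{\nabla\bfw(y)}$ alone. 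One can salvage this with a telescoping-chain argument that redistributes these contributions and absorbs the polynomial factor $k$ into a slightly smaller exponential rate, but that is precisely the work hidden in Boman's Lemma~2.5, and your write-up skips it. The boundary issue you flag ($\Pi_2\bfc\neq\bfc$ because $V_h\subset W^{1,1}_0$) is a separate, additional complication of the same step. The cleanest fix is to do what the paper does: quote the weighted gradient stability directly.
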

\begin{proof}
  We begin with the proof of~\eqref{eq:L2-decay-L}
  \begin{align*}
    \abs{(\Pi_2 \bfv)(x)}
    &\lesssim \dashint_T \abs{\Pi_2 \bfv}\dy
    \\
    &\lesssim \int_\Omega \eta_h(x-y)  \abs{(\Pi_2 \bfv)(y)}\dy =
      \norm{\eta_h(x-\cdot) \Pi_2 \bfv(\cdot)}_{L^1(\Omega)}.
  \end{align*}
  Since our triangulation is quasi-uniform we may choose a constant
  regularized mesh function~$h(x) := h$ in order to apply the results
  of~\cite{ErissonJohnson1995} and~\cite{Boman}.  In particular, by
  Lemma~2.3 of~\cite{Boman} (applied to the case~$p=1$)
  it follows that
  \begin{align*}
    \abs{(\Pi_2 \bfv)(x)}
    &\lesssim       \norm{\eta_h(x-\cdot) 
      \bfv(\cdot)}_{L^1(\Omega)}
      \lesssim \big(\eta_h * (\indicator_\Omega \abs{\bfv})\big)(x).
  \end{align*}
  This proves~\eqref{eq:L2-decay-L}.  Let us remark that the results
  of Boman are unfortunately not properly displayed. In particular,
  they define
  \begin{align*}
    \delta_{\mathcal{T}_h} &:=  \max_{T \in \mathcal{T}_h} \max_{T'
                             \in \omega_T} \abs{1 - h_T^2/h_{T'}^2}.
  \end{align*}
  Thus, only a uniform mesh gives~$\delta_{\mathcal{T}_h}= 0$. It
  would have been better to use
  \begin{align*}
    \delta_{\mathcal{T}_h} &:=  \max_{T \in \mathcal{T}_h} \max_{x \in
                             T} \max_{y \in \omega_T} \abs{1 - h(x)^2/h(y)^2},
  \end{align*}
  which is zero for all quasi-uniform meshes with
  constant regularized mesh function. In the paper of Eriksson and
  Johnson~\cite{ErissonJohnson1995}  this was done properly and the
  case of quasi-uniform meshes is included. A careful inspection of
  the proofs by Boman shows that it is enough to use the alternative
  definition of~$\delta_{\mathcal{T}_h}$, so that quasi-uniform meshes
  are included.

  The proof of~\eqref{eq:L2-decay-W} is analogously using   Lemma~2.5
  of Boman~\cite{Boman}, i.e.
  \begin{align*}
    \norm{\eta_h(x-\cdot) \nabla \Pi_2 \bfw(\cdot)}_{L^1(\Omega)}
    &\lesssim
      \norm{\eta_h(x-\cdot) \nabla
      \bfw(\cdot)}_{L^1(\Omega)}.
  \end{align*}
  This proves the claim.
\end{proof}
Closely related to \eqref{eq:def-S} and \eqref{eq:def-V} is the
shifted Orlicz function $\varphi_a$ defined by
\begin{align} \label{eq:def-phi}
  \varphi_a(t):= \int_0^t (\kappa + a + s)^{p-2}s \ds
\end{align}
for $a \geq 0$ (see also the appendix).  We are now prepared for the
proof of Theorem~\ref{thm:Vstab-L2}.
\begin{proof}[Proof of Theorem~\ref{thm:Vstab-L2}]
  We estimate using Lemma~\ref{lem:hammer}
  \begin{align*}
    \I &:= \int_\Omega \abs{\bfV(\nabla \bfv) - \bfV(\nabla \Pi_2 \bfv)}^2
         \dx
    \\
       &\hphantom{:} \eqsim \int_\Omega \varphi_{\abs{\nabla \bfv}}\left(\abs{ \nabla
         \bfv - \nabla \Pi_2 \bfv} \right) \dx
    \\
       &\hphantom{:}\lesssim \int_\Omega \varphi_{\abs{\nabla \bfv}}\left(\abs{ \nabla
         \bfv - \nabla \PiSZone \bfv} \right) \dx  +  \int_\Omega
         \varphi_{\abs{\nabla \bfv}}\left(\abs{ 
         \nabla \Pi_2( \bfv -\PiSZone \bfv)} \right) \dx
    \\
       &\hphantom{:}=: \textrm{II} + \textrm{III}.
  \end{align*}
  Now, by Lemma~\ref{lem:hammer} and Theorem~\ref{thm:PiSZ-stab}
  \begin{align*}
    \textrm{II} &\eqsim  \int_\Omega \abs{\bfV(\nabla \bfv) -
                  \bfV(\nabla \PiSZone \bfv)}^2 \dy \lesssim                   h^{2\alpha} \left[\bfV( \nabla \bfv) \right]_{N^{\alpha,2}(\Omega)}^2.
  \end{align*}
  Moreover, by Lemma~\ref{lem:L2-decay}, Jensen's inequality
  \begin{align*}
    \textrm{III}
    &
      \lesssim \int_\Omega \phi_{\abs{\nabla \bfv(x)}} \big(\eta_h *
      (\indicator_\Omega \abs{\nabla (\bfv - \PiSZone \bfv})\big)(x) \dx
    \\
    &= \int_\Omega \phi_{\abs{\nabla \bfv(x)}} \bigg( \int_\Omega \eta_h(x-y)
      \abs{\nabla (\bfv - \PiSZone \bfv)(y)} \dy\bigg) \dx
    \\
    &\lesssim \int_\Omega  \int_\Omega \eta_h(x-y) \phi_{\abs{\nabla \bfv(x)}} \big(
      \abs{\nabla (\bfv - \PiSZone \bfv)(y)}\big) \dy \dx.
 \intertext{Now, by the shift-change Lemma~\ref{lem:shift_ch} and Lemma~\ref{lem:hammer}}
    &\lesssim \int_\Omega  \int_\Omega \eta_h(x-y) \phi_{\abs{\nabla \bfv(y)}} \big(
      \abs{\nabla (\bfv - \PiSZone \bfv)(y)}\big) \dy \dx
    \\
    &\qquad +
      \int_\Omega  \int_\Omega \eta_h(x-y) \abs{\bfV(\nabla \bfv)(x) -
      \bfV(\nabla \bfv)(y)}^2\, \dy \dx
    \\
    &\lesssim \int_\Omega \abs{\bfV(\nabla \bfv) - \bfV(\nabla \PiSZone \bfv)}^2 \dy
    \\
    &\qquad +
      \int_\Omega  \int_\Omega \eta_h(x-y) \abs{\bfV(\nabla \bfv)(x) -
      \bfV(\nabla \bfv)(y)}^2\, \dy \dx
    \\
    &=: \textrm{III}_1 + \textrm{III}_2.
  \end{align*}
  Again by Theorem~\ref{thm:PiSZ-stab}
  \begin{align*}
    \textrm{III}_1 &\lesssim 
                     h^{2\alpha} \left[\bfV( \nabla \bfv) \right]_{N^{\alpha,2}(\Omega)}^2.
  \end{align*}
  We estimate further
  \begin{align*}
    \textrm{III}_2 &=       \int_\Omega  \int_\Omega \eta_h(x-y) \abs{\bfV(\nabla \bfv)(x) -
                     \bfV(\nabla \bfv)(y)}^2\, \dy \dx
    \\
                   &=  \int_{\Rn}  \int_{\Omega \cap
                     (\Omega-z)} \eta_h(z) \abs{\bfV(\nabla \bfv(y+z))
                     -\bfV(\nabla \bfv(y)) }^2 \dy\dz
    \\
                   &\lesssim \int_{\Rn}  \eta_h(z)
                     \abs{z}^{2\alpha}
                     \dz \left[\bfV(
                     \nabla \bfv) \right]_{N^{\alpha,2}(\Omega)}^2
    \\
                   &= h^{2\alpha} \int_{\R^n}  \eta(z)
                     \abs{z}^{2\alpha} \dz
                     \left[\bfV( \nabla \bfv)
                     \right]_{N^{\alpha,2}(\Omega)}^2
    \\
                   &\lesssim h^{2\alpha} \left[\bfV( \nabla \bfv) \right]_{N^{\alpha,2}(\Omega)}^2,
  \end{align*}
  which proves the claim.
\end{proof}

\section{Error analysis}
\label{sec:error}

Our aim now is to establish the convergence rate of the difference
between the solution to the continuous problem
solving~\eqref{eq:heatweak1}, and that of the discrete
problem~\eqref{eq:tdiscr}. 
To do this, we first collect the following assumptions on the data. Throughout the rest of this section we assume that
\begin{align}\label{assumpt} 
  \bfu_0\in L^2(\Omega), \quad \bff\in L^{p'}(0,T;W^{-1,p'}(\Omega)).
\end{align}
Recall the definition of weighted averages
\begin{align*}
\mean{\bfg}_{\theta_m} := \int_\mathbb{R} \theta_m(\sigma) \bfg(\sigma) \dsigma
\end{align*} 
with $\theta_m$ given by \eqref{eq:Theta-m} resp. \eqref{eq:Theta-1}.

In the following we state the main result of the paper.
\begin{theorem}\label{thm:main}
  Suppose that \eqref{assumpt} holds. Let $\bfu$ be the unique weak
  solution to \eqref{eq:heat} in the sense of Definition
  \ref{def:weak}.  Assume that
  \begin{align}
    \label{ass:reg}
    \begin{aligned}
      \bfV(\nabla\bfu)&\in L^2(I;\,N^{\alpha_x,2}(\Omega)) \,\cap \,
      N^{\alpha_t,2}(I;L^2(\Omega)) ,
      \\
      \bfu&\in L^\infty(I;N^{\alpha_x,2}(\Omega))
    \end{aligned}
  \end{align}
  for some $\alpha_x,\alpha_t\in(0,1]$. Let $\mathcal{T}_h$ be
  quasi-uniform.  Then we have uniformly in $\tau$ and $h$
  \begin{align*}
    \max\limits_{1\leq m \leq M}
    &\norm{\mean{\bfu}_{J_m} -
      \bfu_{m,h}}_{L^2(\Omega)}^2 +
      \sum_{m=1}^M \, \int_{J_m} \norm{
      \bfV(\nabla \bfu(s)) - \bfV(\nabla
      \bfu_{m,h})}_{L^2(\Omega)}^2 \dd s
    \\
    &\lesssim h^{2\alpha_x} \left( \sup_{s \in [0,T]}
      \seminorm{\bfu(s)}_{N^{\alpha_x,2}(\Omega)}^2 +  \int_0^T \left[
      \bfV(\nabla \bfu(s) ) \right]_{N^{\alpha_x,2}(\Omega)}^2 \dd
      s\right)
    \\
    &+ \tau^{2 \alpha_t} 
      \left[ \bfV(\nabla \bfu)
      \right]_{N^{\alpha_t,2}(I;L^2(\Omega))}^2
,
  \end{align*}
  where $\bfu_{m,h}$ is the solution to \eqref{eq:tdiscr-pre} with
  $\bff_m = \mean{\bff}_{\theta_m}$ and $\bfu_{0,h} := \Pi_2 \bfu_0$,
  where $\Pi_2$ is the~$L^2$-projection to~$V_h$. The  hidden constant
  is independent of~$T$.
\end{theorem}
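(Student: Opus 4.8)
The plan is to derive an energy identity for the error by subtracting the averaged continuous equations \eqref{eq:mean-u-step-later} and \eqref{eq:mean-u-step-1} from the discrete scheme \eqref{eq:tdiscr}, then testing with a projected error. Define the error $\bfe_m := \mean{\bfu}_{J_m} - \bfu_{m,h}$ for $m \geq 0$ (with $\mean{\bfu}_{J_0} = \bfu_0$). Subtracting the two equations gives, in the discrete weak sense against $\bfxi_h \in V_h$,
\begin{align*}
  \int_\Omega d_t \bfe_m \cdot \bfxi_h \dx
  + \int_\Omega \big( \mean{\bfS(\nabla\bfu)}_{\theta_m} - \bfS(\nabla\bfu_{m,h}) \big) : \nabla\bfxi_h \dx = 0,
\end{align*}
since the force terms match by the choice $\bff_m = \mean{\bff}_{\theta_m}$. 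The natural test function is $\bfxi_h = \Pi_2 \bfe_m \in V_h$. The crucial gain from using the $L^2$-projection is the self-adjointness identity \eqref{eq:identity-aux1}, which converts the time term into $\frac12 d_t \norm{\Pi_2\bfe_m}_{L^2}^2 + \frac{\tau}{2}\norm{d_t \Pi_2\bfe_m}_{L^2}^2$; this is exactly what the previous $h$--$\tau$ coupling was needed to avoid, and here it comes for free.

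First I would handle the elliptic (spatial) term. I split $\nabla \Pi_2 \bfe_m = \nabla(\mean{\bfu}_{J_m} - \bfu_{m,h}) - \nabla(\mean{\bfu}_{J_m} - \Pi_2 \mean{\bfu}_{J_m})$ so as to extract the coercive quantity $\int_\Omega (\bfS(\nabla\bfu_{m,h}) - \bfS(\nabla\mean{\bfu}_{J_m})):\nabla(\bfu_{m,h} - \mean{\bfu}_{J_m})\dx \gtrsim \norm{\bfV(\nabla\bfu_{m,h}) - \bfV(\nabla\mean{\bfu}_{J_m})}_{L^2}^2$ via the standard monotonicity equivalence (Lemma~\ref{lem:hammer}). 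The remaining pieces are error terms: one involving the projection defect $\nabla(\mean{\bfu}_{J_m} - \Pi_2\mean{\bfu}_{J_m})$, which Theorem~\ref{thm:Vstab-L2} controls by $h^{\alpha_x}[\bfV(\nabla\mean{\bfu}_{J_m})]_{N^{\alpha_x,2}}$, and one comparing $\mean{\bfS(\nabla\bfu)}_{\theta_m}$ with $\bfS(\nabla\mean{\bfu}_{J_m})$, which is a time-averaging consistency error measured against the Nikolskii seminorm $[\bfV(\nabla\bfu)]_{N^{\alpha_t,2}(I;L^2)}$. Both are absorbed using Young's inequality after passing through the shifted Orlicz function $\varphi_a$ and the shift-change Lemma~\ref{lem:shift_ch}, so that a small fraction of the coercive term hides the cross terms and the rest produces the claimed $h^{2\alpha_x} + \tau^{2\alpha_t}$ bounds.

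Next I would sum over $m = 1, \dots, M$ and invoke discrete summation by parts (telescoping) on the $\frac12 d_t\norm{\Pi_2\bfe_m}_{L^2}^2$ terms. The initial value drops out because $\bfu_{0,h} = \Pi_2\bfu_0$ forces $\Pi_2\bfe_0 = 0$. To recover $\max_m \norm{\mean{\bfu}_{J_m} - \bfu_{m,h}}_{L^2}^2$ rather than just $\max_m \norm{\Pi_2\bfe_m}_{L^2}^2$, I add back $\norm{\mean{\bfu}_{J_m} - \Pi_2\mean{\bfu}_{J_m}}_{L^2}^2$, which is again controlled by Lemma~\ref{lem:PiSZ-L2-stab} and the spatial regularity $\bfu \in L^\infty(I;N^{\alpha_x,2})$, contributing to the $h^{2\alpha_x}$ term. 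Finally, the discrete sum $\sum_m \tau \norm{\bfV(\nabla\mean{\bfu}_{J_m}) - \bfV(\nabla\bfu_{m,h})}_{L^2}^2$ must be upgraded to the integral $\sum_m \int_{J_m}\norm{\bfV(\nabla\bfu(s)) - \bfV(\nabla\bfu_{m,h})}_{L^2}^2\ds$ appearing in the statement; this costs another time-averaging error $\int_{J_m}\norm{\bfV(\nabla\bfu(s)) - \bfV(\nabla\mean{\bfu}_{J_m})}_{L^2}^2\ds$, bounded by $\tau^{2\alpha_t}[\bfV(\nabla\bfu)]_{N^{\alpha_t,2}(I;L^2)}^2$.

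The main obstacle I anticipate is the careful bookkeeping of the time-averaging consistency errors. Because $\mean{\bfS(\nabla\bfu)}_{\theta_m}$ is a weighted average of the \emph{nonlinear} flux rather than the flux of an average, one cannot simply commute $\bfS$ with the average; the comparison between $\mean{\bfS(\nabla\bfu)}_{\theta_m}$ and $\bfS(\nabla\mean{\bfu}_{J_m})$ (and likewise for the gradient point-values) must be rerouted through $\bfV$ and the shift-change technology so that the final estimate is phrased purely in terms of $[\bfV(\nabla\bfu)]_{N^{\alpha_t,2}(I;L^2)}$, with the weight $\theta_m$ having total mass one and support of length $\lesssim \tau$ supplying the factor $\tau^{\alpha_t}$. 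Keeping the constants independent of $T$ requires that the telescoping and the per-step Young absorptions do not accumulate $T$-dependent factors, which is why the coercive term must be retained in full (up to a fixed fraction) at every step.
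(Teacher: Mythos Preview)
Your overall strategy matches the paper's: derive the error equation, test with $\Pi_2\bfe_m$, exploit self-adjointness of $\Pi_2$ for the time term, sum and telescope, and at the end add back $\norm{\mean{\bfu}_{J_m}-\Pi_2\mean{\bfu}_{J_m}}_{L^2}^2$ via Lemma~\ref{lem:PiSZ-L2-stab}. The difference lies in how you organise the elliptic term.

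You pivot around the time-averaged function $\mean{\bfu}_{J_m}$: your coercive quantity is $\norm{\bfV(\nabla\mean{\bfu}_{J_m})-\bfV(\nabla\bfu_{m,h})}_{L^2}^2$, your spatial projection error is $h^{\alpha_x}[\bfV(\nabla\mean{\bfu}_{J_m})]_{N^{\alpha_x,2}}$, and your time-consistency error compares $\mean{\bfS(\nabla\bfu)}_{\theta_m}$ with $\bfS(\nabla\mean{\bfu}_{J_m})$. The paper instead writes $\nabla\bfe_m=\dashint_{J_m}\nabla(\bfu(s)-\bfu_{m,h})\,ds$ and $\nabla(\bfe_m-\Pi_2\bfe_m)=\dashint_{J_m}\nabla(\bfu(s)-\Pi_2\bfu(s))\,ds$, pulling the time integral outside so that every nonlinear quantity appears with a \emph{pointwise} time argument: $\bfS(\nabla\bfu(\sigma))$, $\bfV(\nabla\bfu(s))$, $\bfV(\nabla\Pi_2\bfu(s))$. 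The coercive term is then directly $\dashint_{J_m}\norm{\bfV(\nabla\bfu(s))-\bfV(\nabla\bfu_{m,h})}_{L^2}^2\,ds$ (no upgrade needed), Theorem~\ref{thm:Vstab-L2} applies to $\bfu(s)$ for each $s$ to give $[\bfV(\nabla\bfu(s))]_{N^{\alpha_x,2}}$, and the only time-consistency error is $\norm{\bfV(\nabla\bfu(\sigma))-\bfV(\nabla\bfu(s))}_{L^2}$, which is already a Nikolskii increment.

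Your route can be made to work, but it carries two extra nonlinear comparisons that the paper sidesteps. First, $[\bfV(\nabla\mean{\bfu}_{J_m})]_{N^{\alpha_x,2}}$ is \emph{not} directly controlled by the assumption $\bfV(\nabla\bfu)\in L^2(I;N^{\alpha_x,2}(\Omega))$, because $\bfV$ does not commute with the time average; closing this gap needs Jensen for $\varphi_a$ plus the shift-change Lemma~\ref{lem:shift_ch}, and the shift-change residual is itself a time-averaging error that must be fed back into the $\tau^{2\alpha_t}$ bucket. Second, your final upgrade step costs $\int_{J_m}\norm{\bfV(\nabla\bfu(s))-\bfV(\nabla\mean{\bfu}_{J_m})}_{L^2}^2\,ds$, again a comparison of $\bfV$ at a pointwise gradient with $\bfV$ at an averaged gradient, requiring a best-approximation argument in time analogous to Proposition~\ref{pro:app_V}. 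Both are doable, but the paper's decomposition avoids ever forming $\bfV(\nabla\mean{\bfu}_{J_m})$ and is correspondingly cleaner.
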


\begin{proof}
  Define the averaged error by $\bfe_m := \mean{\bfu}_{J_m} - \bfu_{m,h}$.

  Recall the solution $\bfu$ satisfies \eqref{eq:mean-u-step-later} and \eqref{eq:mean-u-step-1}, whereas the discrete solution $\bfu_{m,h}$ satisfies \eqref{eq:tdiscr-pre}. Therefore it holds for $m=1,\ldots,M$ as an equation in $V_h^*$
  \begin{align} \label{eq:Error}
    \begin{aligned}
      d_t \bfe_m &- \Div \left( \mean{\bfS(\nabla \bfu) }_{\theta_m} -
        \bfS(\nabla \bfu_{m,h}) \right)
      = \mean{\bff}_{\theta_m} - \bff_m
      = 0.
    \end{aligned}
  \end{align}

  Let us also recall that the $L^2$-projection $\Pi_2 : L^2(\Omega) \to V_h$ is defined by 
  \begin{align} \label{eq:L2proj}
    (\Pi_2 \bfv - \bfv, \bfxi_h ) = 0  
  \end{align}
  for all $\bfxi_h \in V_h$. In particular for $\bfxi_h = \Pi_2 \bfv$ it holds 
  \begin{align} \label{eq:L2projb}
    (\Pi_2 \bfv , \Pi_2 \bfv ) = ( \bfv, \Pi_2 \bfv ).
  \end{align}

  Let $m \in \set{1,\ldots,M}$ and choose $\bfxi_h = \Pi_2 \bfe_m$ in \eqref{eq:Error} and get
  \begin{align*}
    \I + \II &:= \int_\Omega d_t \bfe_m \cdot \Pi_2 \bfe_m \dx + \int_\R \int_\Omega \theta_m(\sigma) (\bfS(\nabla \bfu ( \sigma)) - \bfS(\nabla \bfu_{m,h}) ): \nabla \bfe_m \dx \dsigma \\
             &= \int_\R \int_\Omega \theta_m(\sigma) (\bfS(\nabla \bfu ( \sigma)) - \bfS(\nabla \bfu_{m,h}) ): \nabla (\bfe_m - \Pi_2 \bfe_m) \dx \dsigma \\
             &=: \III.
  \end{align*}
  Keep in mind that the weights $\theta_1$ and $\theta_m$ are
  supported in $I_1 \cup I_2$ and $I_{m-1}\cup I_m \cup I_{m+1}$
  respectively, see \eqref{eq:Theta-m} and
  \eqref{eq:Theta-1}. Consequently, the integrals over $\R$ are
  well-defined.
  
  Due to \eqref{eq:L2projb} and \eqref{eq:dtaa} the first term can be written as
  \begin{align*}
    \I &= \int_\Omega d_t \Pi_2 \bfe_m \cdot \Pi_2 \bfe_m \dx = \frac{1}{2} d_t \norm{\Pi_2 \bfe_m}_{L^2(\Omega)}^2 + \frac{\tau}{2} \norm{d_t \Pi_2 \bfe_m}_{L^2(\Omega)}^2.
  \end{align*}
  We split the second term into two parts and use Lemma \ref{lem:hammer}
  \begin{align*}
    \II &= \int_\R \dashint_{J_m} \int_\Omega \theta_m(\sigma) (\bfS(\nabla \bfu ( \sigma)) - \bfS(\nabla \bfu_{m,h}) ): \nabla ( \bfu(s) - \bfu_{m,h} ) \dx \dd s  \dsigma  \\
        &\eqsim \dashint_{J_m} \int_\Omega  \abs{\bfV(\nabla \bfu(s)) - \bfV(\nabla \bfu_{m,h}) }^2 \dx \dd s  \\
        &\quad + \int_\R \dashint_{J_m} \int_\Omega\theta_m(\sigma) (\bfS(\nabla \bfu ( \sigma)) - \bfS(\nabla \bfu(s)) ): \nabla ( \bfu(s) - \bfu_{m,h} ) \dx  \dd s \dsigma \\
        &=: \II[1] + \II[2].
  \end{align*}
  The latter is estimated using Lemma \ref{lem:young2}
  \begin{align*}
    \II[2] &\leq \delta \dashint_{J_m} \int_\Omega  \abs{\bfV(\nabla \bfu(s)) - \bfV(\nabla \bfu_{m,h}) }^2 \dx \dsigma \\
           &\quad + c_\delta \int_\R \dashint_{J_m} \int_\Omega \theta_m(\sigma) \abs{ \bfV( \nabla \bfu(s) ) - \bfV( \nabla \bfu(\sigma) ) }^2 \dx \dd s \dsigma.
  \end{align*}
  The next step is to decompose $\III$ in an analogous way. Due to Lemma \ref{lem:hammer} and \ref{lem:young2} it holds
  \begin{align*}
    \III &=  \dashint_{J_m} \int_\Omega (\bfS(\nabla \bfu ( s)) - \bfS(\nabla \bfu_{m,h}) ): \nabla (\bfu(s) - \Pi_2 \bfu(s) ) \dx \dd s  \\
         &\quad + \int_\R \dashint_{J_m} \int_\Omega \theta_m(\sigma) (\bfS(\nabla \bfu ( \sigma)) - \bfS(\nabla \bfu(s)) ): \nabla (\bfu(s) - \Pi_2 \bfu(s) ) \dx \dd s \dsigma \\
         &\leq \delta \dashint_{J_m} \int_\Omega \abs{\bfV(\nabla \bfu(s)) - \bfV(\nabla \bfu_{m,h}) }^2 \dx \dd s \\
         &\quad + c_\delta  \dashint_{J_m} \int_\Omega  \abs{\bfV(\nabla \bfu(s)) - \bfV(\nabla \Pi_2 \bfu(s))}^2 \dx \dd s \\
         &\quad + c_\delta \int_\R \dashint_{J_m} \int_\Omega
           \theta_m(\sigma) \abs{\bfV(\nabla \bfu(\sigma)) -
           \bfV(\nabla \bfu(s))}^2 \dx \dd s \dsigma
    \\
    &=: \III[1] + \III[2] + \III[3].
  \end{align*}
  Recall the definition of the weights \eqref{eq:Theta-m} and \eqref{eq:Theta-1} and estimate
  \begin{align} \label{eq:VNikol}
    \begin{aligned}
      \lefteqn{\III[2] \leq \int_\R \dashint_{J_m} \int_\Omega \theta_m(\sigma)
        \abs{ \bfV( \nabla \bfu(s) ) - \bfV( \nabla \bfu(\sigma) ) }^2
        \dx \dd s \dsigma} \qquad &
      \\
      &\lesssim \frac{1}{\tau} \int_{ \abs{z} \leq \tau } \int_{ \text{supp}\,\theta_m } \norm{ \bfV( \nabla \bfu(\sigma + z ) ) - \bfV( \nabla \bfu(\sigma) ) }_{L^2(\Omega)}^2  \dsigma \dz.
    \end{aligned}
  \end{align}
  Now, sum over $m$ and multiply by $\tau$,
  \begin{align*}
    &\norm{\Pi_2 \bfe_m}_{L^2(\Omega)}^2 + \tau \sum_{l=1}^m \tau \norm{d_t \Pi_2\bfe_l}_{L^2(\Omega)}^2 \\
    &\quad +  \tau \sum_{l=1}^m \dashint_{J_l} \int_\Omega \abs{\bfV(\nabla \bfu(s)) - \bfV(\nabla \bfu_{l,h}) }^2 \dx \dd s \\
    &\lesssim  \tau \sum_{l=1}^m \dashint_{J_l} \int_\Omega \abs{\bfV(\nabla \bfu(s)) - \bfV(\nabla \Pi_2 \bfu(s))}^2 \dx \dd s \\
    &\quad + \tau \sum_{l=1}^m \int_\R \dashint_{J_l} \int_\Omega \theta_l(\sigma) \abs{\bfV(\nabla \bfu(\sigma)) - \bfV(\nabla \bfu(s))}^2 \dx \dd s \dsigma
    \\
    &=: K_1 + K_2.
  \end{align*}
  The non-linear stability Theorem \ref{thm:Vstab-L2} for the $L^2$-projection yields
  \begin{align*}
    K_1 &\lesssim h^{2\alpha_x} \int_0^T \left[ \bfV(\nabla \bfu(s) ) \right]_{N^{\alpha_x,2}(\Omega)}^2 \dd s.
  \end{align*}
  The second term is bounded using \eqref{eq:VNikol} by
  \begin{align*}
    K_2 &\lesssim  \sum_{l=1}^m  \int_{ \abs{z} \leq \tau } \int_{\text{supp}\, \theta_l } \norm{ \bfV( \nabla \bfu(\sigma + z ) ) - \bfV( \nabla \bfu(\sigma) ) }_{L^2(\Omega)}^2  \dsigma \dz \\
        &\lesssim \tau^{2 \alpha_t} \left[ \bfV(\nabla \bfu) \right]_{N^{\alpha_t,2}(I;L^2(\Omega))}^2.
  \end{align*}
  We have obtained uniform estimates for the projected
  averaged error $\Pi_2 \bfe_m$
  \begin{align*}
    \lefteqn{\norm{\Pi_2 \bfe_m}_{L^2(\Omega)}^2 + \tau \sum_{l=1}^m \dashint_{J_l} \int_\Omega \abs{\bfV(\nabla \bfu(s)) - \bfV(\nabla \bfu_{l,h}) }^2 \dx \dd s} \quad
    \\
    &\lesssim
      h^{2\alpha_x} \int_0^T \left[ \bfV(\nabla \bfu(s) )
      \right]_{N^{\alpha_x,2}(\Omega)}^2 \dd s +
      \tau^{2 \alpha_t} \left[ \bfV(\nabla \bfu) \right]_{N^{\alpha_t,2}(I;L^2(\Omega))}^2.
  \end{align*}
  However, we are interested in the error
  $\bfe_m:= \mean{\bfu}_{J_m} - \bfu_{m,h}$. Thus, we estimate
  \begin{align*}
    \norm{\bfe_m}_{L^2(\Omega)}^2 &\lesssim \norm{\mean{\bfu}_{J_m} -
                                    \Pi_2 \mean{\bfu}_{J_m}}_{L^2(\Omega)}^2 + \norm{\Pi_2 \bfe_m}_{L^2(\Omega)}^2 \\
                                  &=: K_3 + K_4.
  \end{align*}
  The first term can be bounded using Jensen's inequality and Lemma~\ref{lem:PiSZ-L2-stab}
  \begin{align*}
    K_3 &\lesssim \dashint_{J_m} \norm{\bfu(s) - \Pi_2
          \bfu(s)}_{L^2(\Omega)}^2 \dd s
          \leq \dashint_{J_m} \norm{\bfu(s) - \PiSZzero
          \bfu(s)}_{L^2(\Omega)}^2 \dd s
    \\
        &\lesssim h^{2\alpha_x} \sup_{s \in J_m} \seminorm{\bfu(s)}_{N^{\alpha_x,2}(\Omega)}^2.
  \end{align*}
  The bound for $K_4$ has already been established.

  Collecting all terms and taking the maximum over $m$, we arrive at the desired estimate
  \begin{align*}
    \max\limits_{1\leq m \leq M} &\norm{\mean{\bfu}_{J_m} - \bfu_{m,h}}_{L^2(\Omega)}^2 +  \sum_{m=1}^M \, \int_{J_m} \norm{ \bfV(\nabla \bfu(s)) - \bfV(\nabla \bfu_{m,h})}_{L^2(\Omega)}^2 \dd s  \\
                                 &\lesssim h^{2\alpha_x} \left( \sup_{s \in [0,T]} \seminorm{\bfu(s)}_{N^{\alpha_x,2}(\Omega)}^2 +  \int_0^T \left[ \bfV(\nabla \bfu(s) ) \right]_{N^{\alpha_x,2}(\Omega)}^2 \dd s\right) \\
                                 &\quad + \tau^{2 \alpha_t}
                                   \left[ \bfV(\nabla \bfu)
                                   \right]_{N^{\alpha_t,2}(I;L^2(\Omega))}^2.
  \end{align*}
  This proves the claim.
\end{proof}
It is not that important that we choose
$\bff_m = \mean{\bff}_{\theta_m}$. Indeed, we can allow for a certain
class of discrete forces $\bff_m$ in our numerical scheme and still
have convergence of order $\alpha_t$.
\begin{corollary} \label{cor:Main-2} Let the assumption of Theorem
  \ref{thm:main} be satisfied. Additionally, assume that there exists
  $c_\bff \geq 0$ independent of $\tau$ such that
  \begin{align} \label{eq:reg-f}
    \tau \sum_{m=1}^M \norm{\mean{\bff}_{\theta_m} - \bff_m }_{L^2(\Omega)}^2 \leq c_\bff \tau^{2\alpha_t}.
  \end{align}
  Then we have uniformly in $\tau$ and $h$
  \begin{align*}
    \lefteqn{\max\limits_{1\leq m \leq M}
    \norm{\mean{\bfu}_{J_m} - \bfu_{m,h}}_{L^2(\Omega)}^2 +
    \sum_{m=1}^M \, \int_{J_m} \norm{ \bfV(\nabla \bfu(s)) -
    \bfV(\nabla \bfu_{m,h})}_{L^2(\Omega)}^2 \dd s}
    \quad&
    \\
         &\lesssim h^{2\alpha_x} \left( \sup_{s \in [0,T]}
           \seminorm{\bfu(s)}_{N^{\alpha_x,2}(\Omega)}^2 +  \int_0^T \left[
           \bfV(\nabla \bfu(s) ) \right]_{N^{\alpha_x,2}(\Omega)}^2 \dd
           s\right)
    \\
         &\quad + \tau^{2 \alpha_t} \!\left(\!
           \left[ \bfV(\nabla \bfu)
           \right]_{N^{\alpha_t,2}(I;L^2(\Omega))}^2
           +  c_{\bff}\right).
  \end{align*}
  In particular, we can use $\bff_m:= \bff(t_m)$ for
  $\bff \in \mathcal C^{0,\alpha_t}(0,T;L^2(\Omega))$.  The  hidden constant
  may depend exponentially on~$T$.
\end{corollary}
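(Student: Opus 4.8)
The plan is to retrace the proof of Theorem~\ref{thm:main} essentially verbatim, observing that the choice $\bff_m = \mean{\bff}_{\theta_m}$ entered \emph{only} through the error equation~\eqref{eq:Error}, whose right-hand side $\mean{\bff}_{\theta_m} - \bff_m$ vanished identically there. For a general discrete force this right-hand side no longer vanishes, so testing the error equation with $\bfxi_h = \Pi_2 \bfe_m \in V_h$ produces, on top of the identity $\I + \II = \III$ from the theorem, one additional source term. Concretely, one obtains $\I + \II = \III + \IV$ with
\[
  \IV := \int_\Omega (\mean{\bff}_{\theta_m} - \bff_m)\cdot\Pi_2\bfe_m \dx.
\]
Every estimate for $\I$, $\II$, $\III$ (and hence the bounds giving $K_1$ and $K_2$) carries over unchanged, so the entire task is to control $\IV$.

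First I would split $\IV$ by Cauchy--Schwarz and Young's inequality,
\[
  \IV \leq \tfrac12\norm{\mean{\bff}_{\theta_m} - \bff_m}_{L^2(\Omega)}^2 + \tfrac12\norm{\Pi_2 \bfe_m}_{L^2(\Omega)}^2.
\]
After multiplying by $\tau$ and summing over $l = 1,\dots,m$ (using again that $\Pi_2\bfe_0 = \Pi_2\bfu_0 - \Pi_2\Pi_2\bfu_0 = 0$, so the telescoping of the $d_t$-term recovers $\norm{\Pi_2\bfe_m}_{L^2(\Omega)}^2$ on the left), the first half is exactly the quantity appearing in hypothesis~\eqref{eq:reg-f}, so $\tau\sum_{l=1}^m\norm{\mean{\bff}_{\theta_l} - \bff_l}_{L^2(\Omega)}^2 \leq c_\bff\tau^{2\alpha_t}$. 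This contributes precisely the extra summand $c_\bff\tau^{2\alpha_t}$ in the claimed bound.

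The main obstacle is the second half, $\tfrac{\tau}{2}\sum_{l=1}^m\norm{\Pi_2\bfe_l}_{L^2(\Omega)}^2$. In contrast to every term treated in Theorem~\ref{thm:main}, this cannot be absorbed into the coercive quantity $\tau\sum_l\dashint_{J_l}\int_\Omega \abs{\bfV(\nabla\bfu(s)) - \bfV(\nabla\bfu_{l,h})}^2\dx\ds$, because the latter controls gradient differences of $\bfV$ rather than $\Pi_2\bfe_l$ itself. Instead I would close the estimate by a discrete Gronwall inequality: setting $a_m := \norm{\Pi_2\bfe_m}_{L^2(\Omega)}^2$, the summed estimate has the form $a_m \leq b_m + c\,\tau\sum_{l=1}^m a_l$, where $b_m$ collects $K_1$, $K_2$ and the new $c_\bff\tau^{2\alpha_t}$; for $\tau$ small the implicit term $c\tau a_m$ is absorbed on the left, and the discrete Gronwall lemma gives $a_m \lesssim b_m\,e^{c\,m\tau} \leq b_m\,e^{cT}$. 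This Gronwall step is exactly what forces the hidden constant to depend exponentially on $T$, unlike the $T$-independent constant of Theorem~\ref{thm:main}.

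Finally, for the concrete choice $\bff_m := \bff(t_m)$ with $\bff \in \mathcal C^{0,\alpha_t}(\overline I;L^2(\Omega))$ I would verify~\eqref{eq:reg-f} directly. Since $\theta_m$ has total mass one and is supported in $J_m$, where $\abs{\sigma - t_m}\leq\tau$, Jensen's inequality and Hölder continuity yield
\[
  \norm{\mean{\bff}_{\theta_m} - \bff(t_m)}_{L^2(\Omega)} \leq \int_\R \theta_m(\sigma)\norm{\bff(\sigma) - \bff(t_m)}_{L^2(\Omega)}\dsigma \lesssim \tau^{\alpha_t}\seminorm{\bff}_{\mathcal C^{0,\alpha_t}(\overline I;L^2(\Omega))}.
\]
Squaring, multiplying by $\tau$ and summing over the $M \eqsim T/\tau$ time steps gives $\tau\sum_{m=1}^M\norm{\mean{\bff}_{\theta_m} - \bff(t_m)}_{L^2(\Omega)}^2 \lesssim T\,\tau^{2\alpha_t}\seminorm{\bff}_{\mathcal C^{0,\alpha_t}(\overline I;L^2(\Omega))}^2$, so~\eqref{eq:reg-f} holds with $c_\bff \lesssim T\,\seminorm{\bff}_{\mathcal C^{0,\alpha_t}(\overline I;L^2(\Omega))}^2$, completing the argument.
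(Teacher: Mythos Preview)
Your proposal is correct and follows the same route as the paper: identify the extra source term $\IV$, bound it by Cauchy--Schwarz and Young, and close with a discrete Gronwall inequality (whence the exponential dependence on $T$). One minor inaccuracy in your verification of the special case: for $m\geq 2$ the weight $\theta_m$ is supported on $[t_{m-2},t_{m+1}]$ rather than on $J_m$ (see~\eqref{eq:Theta-m}), so $\abs{\sigma-t_m}\leq 2\tau$ there, but this changes only the implicit constant and not the conclusion.
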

\begin{proof}
  The proof is essentially the same as for Theorem \ref{thm:main}. But we do not have the the cancellation of $\mean{\bff}_{\theta_m}$ and $\bff_m$ as in \eqref{eq:Error}. Instead it holds for all $m \in \set{1,\ldots,M}$ and $\bfxi_h \in V_h$
  \begin{align} \label{eq:GalOrt-f}
    \begin{aligned}
      &\int_\Omega d_t \bfe_m \cdot \bfxi_h \dx + \int_\R \int_\Omega \theta_m(\sigma)(\bfS(\nabla \bfu(\sigma)) - \bfS(\nabla \bfu_{m,h}) ) \cdot \nabla \bfxi_h \dx \dsigma \\
      &\quad = \int_\R \int_\Omega  \theta_m(\sigma) \left( \bff(\sigma) - \bff_m \right)  \cdot \bfxi_h \dx\dsigma.
    \end{aligned}
  \end{align}
  We choose $\xi_h = \Pi_2 \bfe_m$ and proceed as in the proof of
  Theorem \ref{thm:main}. The additional term involving the difference
  in $\bff$ is bounded using Hölder's and Young's inequality by
  \begin{align*}
    \int_\Omega \int_\R \theta_m(\sigma) \left( \bff(\sigma) - \bff_m
    \right) \dsigma \cdot \Pi_2 \bfe_m \dx
    &\leq
      \norm{\mean{\bff}_{\theta_m}
      - \bff_m}_{L^2(\Omega)}
      \norm{\Pi_2
      \bfe_m}_{L^2(\Omega)}
    \\
    &\leq  \norm{\mean{\bff}_{\theta_m} - \bff_m}_{L^2(\Omega)}^2 +
      \norm{\Pi_2 \bfe_m}_{L^2(\Omega)}^2 . 
  \end{align*}
  The summation over $m$ and the multiplication by $\tau$ yield
  \begin{align*}
    &\norm{\Pi_2 \bfe_m}_{L^2(\Omega)}^2 + \tau \sum_{l=1}^m \tau
      \norm{d_t \Pi_2\bfe_l}_{L^2(\Omega)}^2
    \\
    &\quad +  \tau \sum_{l=1}^m \dashint_{J_l} \int_\Omega \abs{\bfV(\nabla \bfu(s)) - \bfV(\nabla \bfu_{l,h}) }^2 \dx \dd s \\
    &\lesssim  \tau \sum_{l=1}^m \dashint_{J_l} \int_\Omega \abs{\bfV(\nabla \bfu(s)) - \bfV(\nabla \Pi_2 \bfu(s))}^2 \dx \dd s \\
    &\quad + \tau \sum_{l=1}^m \int_\R \dashint_{J_l} \int_\Omega
      \theta_l(\sigma) \abs{\bfV(\nabla \bfu(\sigma)) - \bfV(\nabla
      \bfu(s))}^2 \dx \dd s \dsigma
    \\
    &\quad  + \tau \sum_{l=1}^m \norm{\mean{\bff}_{\theta_l} - \bff_l}_2^2 +  \tau \sum_{l=1}^m  \norm{\Pi_2 \bfe_l}_{L^2(\Omega)}^2.
  \end{align*}
  The same arguments as in Theorem~\ref{thm:main}, an application of
  the discrete Gronwall inequality (with constants that depend
  exponentially on the time horizon $T$) and~\eqref{eq:reg-f} allow
  to close the argument.
\end{proof}
  \begin{remark}
    \label{rem:u-hoelder}
    If additionally
    $\bfu \in\mathcal{C}^{0,\alpha_t}(\overline I;L^2(\Omega))$, then
    we can replace
    \begin{align*}
      \max_{1\leq m \leq M} \norm{\mean{\bfu}_{J_m} -
      \bfu_{m,h}}_{L^2(\Omega)}^2
    \end{align*}
    in the error estimate of Theorem~\ref{thm:main} and
    Corollary~\ref{cor:Main-2} by the point-wise error
    \begin{align*}
      \max_{1\leq m \leq M} \norm{\bfu(t_m) -
      \bfu_{m,h}}_{L^2(\Omega)}^2.
    \end{align*}
    This follows immediately from
    \begin{align*}
      \norm{\bfu(t_m) -
      \bfu_{m,h}}_{L^2(\Omega)} 
      &\lesssim  \norm{\mean{\bfu}_{J_m} -
        \bfu_{m,h}}_{L^2(\Omega)} + \tau^{\alpha_t} \left[\bfu
        \right]_{\mathcal{C}^{0,\alpha_t}(\overline 
        I;L^2(\Omega))}.
  \end{align*}
\end{remark}

\section{Numerical Experiments}
\label{sec:exp}
This section underlines the theoretical results of this paper and exploits the practical behaviour of the proposed numerical scheme. The experiments base on the open source tool for solving partial differential equations FEniCS \cite{LoggMardalWells12}. The supplementary material of this paper contains the implementations of the following three experiments.

\subsection{Constant force on slit domain}\label{subsec:Slit}
The first experiment approximates the solution to the $p$-heat equation \eqref{eq:heat} on the slit domain $\Omega = (-1,1)^2\setminus (-1,0] \times \lbrace 0 \rbrace$ with constant right-hand side $\bff\equiv 2$. The experiment applies the numerical scheme of this paper with the discrete spaces $V_h$ from \eqref{def:Vh} with $r=1$ (piece-wise affine polynomials) and $r=2$ (piece-wise quadratic polynomials) for a sequence of uniformly refined meshes $\mathcal{T}_h$ and halved time steps $\tau$. For $r=1$, the smallest space $V_h$ is of dimension $\dim V_h =\textup{ndof} = 12$ and the computation utilizes $M = 4$ time steps, the largest space $V_h$ is of dimension $\dim V_h = \textup{ndof} = 82689$ and the computation utilizes $M = 512$ time steps ($33 \leq \dim V_h \leq 82689$ and $4 \leq M \leq 256$ for $r=2$). 
Since the exact solution to this problem is unknown, we compare the solutions $\bfu_{m,h} \in V_h$ to a reference solution $\bfu_\textup{ref} \in L^2(I;V_h^\textup{ref})$.
The space $V_h^\textup{ref}$ is of polynomial degree $r+1$ and dimension $\dim V_h^\textup{ref} = 1337107$ for $r=1$ and $\dim V_h = 875467$ for $r=2$. The underlying triangulation of $V_h^\textup{ref}$ results from an adaptive finite element loop (see \cite[Sec.\ 5.1]{CarstensenFeischlPagePraetorius14}) for the Poisson model problem $-\Delta \bfu \equiv 1$ in  $H^{-1}(\Omega)$. The reference solution $\bfu_\textup{ref}$ results from this paper's scheme with $M = 1024$ ($r=1$) and $M=512$ ($r=2$) time steps.
We expect from regularity
theory that
\begin{align}\label{eq:regExp1}
\begin{aligned}
  \bfV(\nabla\bfu) &\in
                     L^2(I;N^{\frac 12,2}(\Omega))\cap
                     N^{1,2}(I;L^2(\Omega)),
  \\
    \bfu &\in
    L^\infty(I;N^{1,2}(\Omega))\cap
    \mathcal{C}^{0,1}(\overline{I};L^2(\Omega)).
\end{aligned}
\end{align}
Thus, the solution is smooth in time ($\alpha_t = 1$) but rough in space ($\alpha_x = 1/2$). The convergence history plots in Figure \ref{fig:slit1} displays the convergence of the error contributions 
\begin{align}\label{eq:DefErrors}
\begin{aligned}
\textup{err}_{L^\infty L^2} & :=\max_{1\leq m \leq M} \lVert \bfu_{h,m} - \langle \bfu_\textup{ref}\rangle_{J_m}\rVert_{L^2(\Omega)}^2,\\
\textup{err}_{L^2 \bfV} &: = \tau \sum_{m=1}^M \lVert \bfV (\nabla \bfu_{h,m}) -\bfV(\nabla \langle \bfu_\textup{ref}\rangle_{J_m})\rVert_{L^2(\Omega)}^2 \\
& \hphantom{:}\approx  \tfrac 12 \sum_{m=1}^M \int_{J_m} \lVert \bfV (\nabla \bfu_{h,m}) -\bfV(\nabla \bfu_\textup{ref})\rVert_{L^2(\Omega)}^2\,\mathrm{d}\sigma.  
\end{aligned}
\end{align}
The resulting rate is in agreement with the a priori estimate in Theorem \ref{thm:main}, that is, the error $\textup{err}_{L^2\bfV} \eqsim h \eqsim \tau$. The fast convergence of the error $\textup{err}_{L^\infty L^2} \eqsim h^2 \eqsim \tau^2$ might result from the higher regularity of $\bfu$ in \eqref{eq:regExp1}.
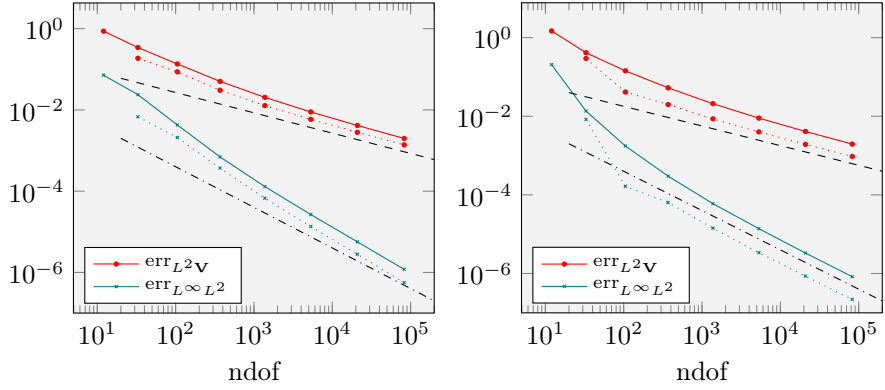
\begin{figure}
\begin{center}
\begin{tikzpicture}
\begin{axis}[
clip=false,
width=.5\textwidth,
height=.45\textwidth,
xmode = log,
ymode = log,
xlabel={ndof},
cycle multi list={\nextlist MyColors},
scale = {1},
ymin = {1e-7},
clip = true,
legend cell align=left,
legend style={legend columns=1,legend pos= south west,font=\fontsize{7}{5}\selectfont}
]
	\addplot table [x=ndof,y=sqIntegralError] {Data/Exp1/Experiment1_deg_1_degRef_2_p_1.5.txt};
		\addplot table [x=ndof,y=sqLinftyError] {Data/Exp1/Experiment1_deg_1_degRef_2_p_1.5.txt};
		\pgfplotsset{cycle list shift=1}
	\addplot table [x=ndof,y=sqIntegralError] {Data/Exp1/Experiment1_deg_2_degRef_3_p_1.5.txt};	
		\addplot table [x=ndof,y=sqLinftyError] {Data/Exp1/Experiment1_deg_2_degRef_3_p_1.5.txt};
		\addplot[dashed,sharp plot,update limits=false] coordinates {(2e1,6e-2) (2e5,6e-4)};\label{plot:hinv}
		\addplot[dash dot,sharp plot,update limits=false] coordinates {(2e1,2e-3) (2e5,2e-7)};\label{plot:hhalfinv}
	\legend{
	{$\textup{err}_{L^2 \bfV}$},
	{$\textup{err}_{L^\infty L^2}$},};
\end{axis}
\end{tikzpicture}
\begin{tikzpicture}
\begin{axis}[
clip=false,
width=.5\textwidth,
height=.45\textwidth,
xmode = log,
ymode = log,
xlabel={ndof},
cycle multi list={\nextlist MyColors},
scale = {1},
ymin = {1e-7},
clip = true,
legend cell align=left,
legend style={legend columns=1,legend pos= south west,font=\fontsize{7}{5}\selectfont}
]
	\addplot table [x=ndof,y=sqIntegralError] {Data/Exp1/Experiment1_deg_1_degRef_2_p_3.0.txt};
	\addplot table [x=ndof,y=sqLinftyError] {Data/Exp1/Experiment1_deg_1_degRef_2_p_3.0.txt};
		\pgfplotsset{cycle list shift=1}
	\addplot table [x=ndof,y=sqIntegralError] {Data/Exp1/Experiment1_deg_2_degRef_3_p_3.0.txt};
	\addplot table [x=ndof,y=sqLinftyError] {Data/Exp1/Experiment1_deg_2_degRef_3_p_3.0.txt};
	\addplot[dashed,sharp plot,update limits=false] coordinates {(2e1,4e-2) (2e5,4e-4)};
		\addplot[dash dot,sharp plot,update limits=false] coordinates {(2e1,2e-3) (2e5,2e-7)};
	\legend{
	{$\textup{err}_{L^2 \bfV}$},
	{$\textup{err}_{L^\infty L^2}$}};
\end{axis}
\end{tikzpicture}
\caption{Convergence history plots for the experiment in Section \ref{subsec:Slit} with $p=1.5$ (left) and $p=3$ (right) as well as polynomial degrees $r=1$ (solid) and $r=2$ (dotted). The dashed line (\ref{plot:hinv}) indicates rate $\textup{err}_\textup{total} \eqsim \textup{ndof}^{-1/2} \eqsim h$ and the dash-dotted line (\ref{plot:hhalfinv}) the rate $\textup{ndof}^{-1} \eqsim h^2$.}\label{fig:slit1}
\end{center}
\end{figure}
\subsection{Rough in time}\label{sec:RoughInTime}
This experiment solves the $p$-heat equation \eqref{eq:heat} on the
unit square domain $\Omega = (0,1)^2$ in the time interval
$I = (-0.1,0.1)$ with right hand side
$\bff(t) \equiv \textup{sgn}(t) \lvert t\rvert^{-\beta}$ for
fixed parameter $\beta \in \lbrace 0.1,0.5,0.9\rbrace$ and all $t\in I$. The
discrete space $V_h$ is of polynomial degree $r=1$.
The
experiment compares the discrete solutions to a reference solution
$\bfu_\textup{ref}$ that results from a computation with $M = 1024$
time steps and a discrete space $V_h^\textup{ref}$ of higher
polynomial degree $r+1 = 2$ with dimension $\dim V^\textup{ref}_h = 263169$.  Besides
the squared errors from \eqref{eq:DefErrors}, the convergence
history plots in Figure \ref{fig:RoughInTime} display the squared
error
\begin{align*}
  \textup{err}_{L^{p'}\bfS} := \left( \tau \sum_{m=1}^M \lVert \bfS(\nabla \bfu_{h,m}) - \bfS(\nabla \langle \bfu_\textup{ref}\rangle_{J_m})\rVert_{L^{p'}(\Omega)}^{p'}\right)^{2/p'}.
\end{align*}
We expect that~$\bfu$ is smooth in space ($\alpha_x=1$) but rough in
time. In particular,
$\bfu \in C^{1-\beta}(\overline{I}; L^2(\Omega))$. Thus,
Remark~\ref{rem:u-hoelder} would apply with~$\alpha_t =
1-\beta$. However, if we only look at averaged errors,
Theorem~\ref{thm:main} shows that the restriction for $\alpha_t$ comes
from $\bfV(\nabla \bfu) \in N^{\alpha_t,2}(I;L^2(\Omega))$. Inspired
by the regularity of~\cite{CiaMaz20}, we expect that $\partial_t \bfu$
behaves similar to~$\bff$. Hence, heuristic calculations suggest for
$p=1.5$ that $\bfV(\nabla \bfu) \in N^{\alpha_t,2}(I;L^2(\Omega))$
with $\alpha_t = 0.575$ for $\beta= 0.9$ and $\alpha_t = 0.875$ for
$\beta =0.5$ and $\alpha_t = 1$ for $\beta=0.1$. For $p=3$, we expect
$\alpha_t =1$ for $\beta = 0.5,0.1$ and $\alpha_t = 0.65$ for
$\beta = 0.9$.  Indeed, the convergence history plot in Figure
\ref{fig:RoughInTime} indicates rates of convergence far better than
$1-\beta$. For $p=1.5$, the convergence of the error
$\textup{err}_{L^2\bfV}$ agrees with our heuristic calculations and
Theorem \ref{thm:main}. For $p=3$, the rate for $\beta = 0.9$ and
$\beta = 0.5$ seems to be slightly worse than our heuristic
predicts. Either the heuristic is inexact, or we observe a
pre-asymptotic effect. Such a pre-asymptotic effect might also lead to
the slightly worse convergence rate in the experiment of Figure
\ref{fig:singulart} with $p=3$.  The rate of the errors
$\textup{err}_{L^{p'}\bfS} \eqsim \textup{err}_{L^2\bfV}$ is similar
in all computations.  The error $\textup{err}_{L^\infty L^2}$
converges with a better rate than the error $\textup{err}_{L^2\bfV}$.
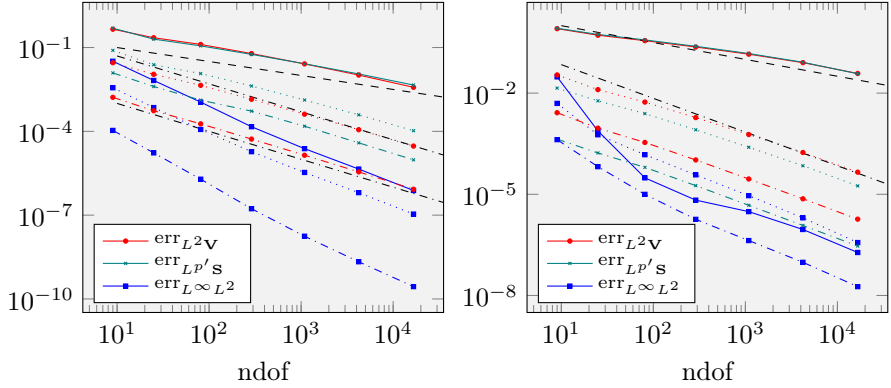
\begin{figure}
\begin{center}
\begin{tikzpicture}
\begin{axis}[
clip=false,
width=.5\textwidth,
height=.45\textwidth,
xmode = log,
ymode = log,
cycle multi list={\nextlist MyColors},
xlabel={ndof},
scale = {1},
clip = true,
legend cell align=left,
legend style={legend columns=1,legend pos= south west,font=\fontsize{7}{5}\selectfont}]
	\addplot table [x=ndof,y=sqVerr] {Data/Exp5/Exp5_avg_p_1.5_beta_-0.9_I_-0.1_0.1.txt};
		\addplot table [x=ndof,y expr=\thisrow{sqAerr}^(2/3)] {Data/Exp5/Exp5_avg_p_1.5_beta_-0.9_I_-0.1_0.1.txt};
		\addplot table [x=ndof,y=sqLinftyError] {Data/Exp5/Exp5_avg_p_1.5_beta_-0.9_I_-0.1_0.1.txt};
	\addplot table [x=ndof,y=sqVerr] {Data/Exp5/Exp5_avg_p_1.5_beta_-0.5_I_-0.1_0.1.txt};
		\addplot table [x=ndof,y expr=\thisrow{sqAerr}^(2/3)] {Data/Exp5/Exp5_avg_p_1.5_beta_-0.5_I_-0.1_0.1.txt};
	\addplot table [x=ndof,y=sqLinftyError] {Data/Exp5/Exp5_avg_p_1.5_beta_-0.5_I_-0.1_0.1.txt};
	\addplot table [x=ndof,y=sqVerr] {Data/Exp5/Exp5_avg_p_1.5_beta_-0.1_I_-0.1_0.1.txt};
		\addplot table [x=ndof,y expr=\thisrow{sqAerr}^(2/3)] {Data/Exp5/Exp5_avg_p_1.5_beta_-0.1_I_-0.1_0.1.txt};
	\addplot table [x=ndof,y=sqLinftyError] {Data/Exp5/Exp5_avg_p_1.5_beta_-0.1_I_-0.1_0.1.txt};
\addplot[dashed,sharp plot,update limits=false] coordinates {(1e1,1e-1) (1e5,1e-3)};
		\addplot[dash dot,sharp plot,update limits=false] coordinates {(1e1,1e-3) (1e5,1e-7)};
		\addplot[dash dot,sharp plot,update limits=false] coordinates {(1e1,5e-2) (1e5,5e-6)};
	\legend{
	{$\textup{err}_{L^2\bfV}$},
	{$\textup{err}_{L^{p'}\bfS}$},
	{$\textup{err}_{L^\infty L^2}$}};
\end{axis}
\end{tikzpicture}
\begin{tikzpicture}
\begin{axis}[
clip=false,
width=.5\textwidth,
height=.45\textwidth,
xmode = log,
ymode = log,
xlabel={ndof},
cycle multi list={\nextlist MyColors},
scale = {1},
clip = true,
legend cell align=left,
legend style={legend columns=1,legend pos= south west,font=\fontsize{7}{5}\selectfont}
]
	\addplot table [x=ndof,y=sqVerr] {Data/Exp5/Exp5_avg_p_3_beta_-0.9_I_-0.1_0.1.txt};
		\addplot table [x=ndof,y expr=\thisrow{sqAerr}^(4/3)] {Data/Exp5/Exp5_avg_p_3_beta_-0.9_I_-0.1_0.1.txt};
	\addplot table [x=ndof,y=sqLinftyError] {Data/Exp5/Exp5_avg_p_3_beta_-0.9_I_-0.1_0.1.txt};
	\addplot table [x=ndof,y=sqVerr] {Data/Exp5/Exp5_avg_p_3_beta_-0.5_I_-0.1_0.1.txt};
		\addplot table [x=ndof,y expr=\thisrow{sqAerr}^(4/3)] {Data/Exp5/Exp5_avg_p_3_beta_-0.5_I_-0.1_0.1.txt};
	\addplot table [x=ndof,y=sqLinftyError] {Data/Exp5/Exp5_avg_p_3_beta_-0.5_I_-0.1_0.1.txt};
	\addplot table [x=ndof,y=sqVerr] {Data/Exp5/Exp5_avg_p_3_beta_-0.1_I_-0.1_0.1.txt};
		\addplot table [x=ndof,y expr=\thisrow{sqAerr}^(4/3)] {Data/Exp5/Exp5_avg_p_3_beta_-0.1_I_-0.1_0.1.txt};
	\addplot table [x=ndof,y=sqLinftyError] {Data/Exp5/Exp5_avg_p_3_beta_-0.1_I_-0.1_0.1.txt};
\addplot[dashed,sharp plot,update limits=false] coordinates {(1e1,1e0) (1e5,1e-2)};\label{plot:Dashed}
		\addplot[dash dot,sharp plot,update limits=false] coordinates {(1e1,7e-2) (1e5,7e-6)};\label{plot:DashDot}
	\legend{
	{$\textup{err}_{L^2\bfV}$},
	{$\textup{err}_{L^{p'}\bfS}$},
	{$\textup{err}_{L^\infty L^2}$},};
\end{axis}
\end{tikzpicture}
\caption{Convergence history plots of the experiment from Section \ref{sec:RoughInTime} with $\beta = 0.9$ (solid), $\beta = 0.5$ (dotted), and $\beta = 0.1$ (dash-dotted) for $p=1.5$ (left) and $p=3$ (right). The dashed line (\ref{plot:Dashed}) indicates the rate $\textup{ndof}^{-1/2} \eqsim h\eqsim \tau$ and the dash-dotted lines (\ref{plot:DashDot}) indicate the rate $\textup{ndof}^{-1} \eqsim h^2 \eqsim \tau^2$.}\label{fig:RoughInTime}
\end{center}
\end{figure}

\subsection{Known solution}\label{sec:KnownSol}
This experiment designs the right-hand side $\bff$ and the inhomogeneous Dirichlet boundary conditions such that the solution reads
\begin{align}\label{eq:SolExp2}
\begin{aligned}
  \bfu(x,t) &= p' |t|^{1/2} |x|^{1/p'}&&(\text{with }1/p + 1/p' = 1),
  \\
  \partial_t \bfu(x,t) & = p'/2\, \textup{sgn}(t) |t|^{-1/2} |x|^{1/p'},
  \\
  \bfV(\nabla \bfu(x,t)) & = |t|^{p/4}|x|^{-1/2} \tfrac{x}{\abs{x}},
  \\
  \bfS(\nabla \bfu(x,t)) & = |t|^{(p-1)/2}|x|^{-1/p'} \tfrac{x}{\abs{x}}.
\end{aligned}
\end{align}  
The time interval $I=(-1,1)$ and the domain is either the centred square domain $\Omega_1 = (-1,1)^2$ or the shifted square domain $\Omega_2 = (1,3) \times (-1,1)$. The function $\bfu$ has singularities in space (at $x=0$) and time (at $t=0$), which cause the reduced regularity $\alpha_t  =1/2$ and $\alpha_x = 1/2$ (for the domain $\Omega_1$) and $\alpha_x=1$ (for $\Omega_2$). We include the inhomogeneous Dirichlet boundary conditions by averaging the nodal interpolation: Let $\Pi_h:W^{1,p}\to V_h$ be the nodal interpolation operator onto the space of piece-wise affine polynomials $V_h$ from \eqref{def:Vh}, then we set at the boundary 
\begin{align*}
\bfu_{m,h}|_{\partial\Omega} := \langle \Pi_h \bfu \rangle_{J_m}|_{\partial \Omega}\qquad\text{for all }m=1,\dots,M.
\end{align*}
The convergence history plots in Figure \ref{fig:singulart} display the squared errors
\begin{align*}
\textup{err}_{L^\infty L^2} & :=\max_{1\leq m \leq M} \lVert \bfu_{h,m} - \langle \bfu_\textup{ref}\rangle_{J_m}\rVert_{L^2(\Omega)}^2,\\
\textup{err}_{L^2 \langle \bfV\rangle_{J_m}} & := \tau \sum_{m=1}^M \lVert \bfV (\nabla \bfu_{h,m}) - \langle \bfV(\nabla \bfu) \rangle_{J_m}\rVert_{L^2(\Omega)}^2,\\
\textup{err}_{L^2 \bfV} & := \sum_{m=1}^M \int_{J_m} \lVert \bfV (\nabla \bfu_{h,m}) -\bfV(\nabla \bfu)\rVert_{L^2(\Omega)}^2\,\mathrm{d}\sigma\\
&\hphantom{:} = \textup{err}_{L^2 \langle \bfV\rangle_{J_m}} +  \sum_{m=1}^M \int_{J_m} \lVert  \bfV (\nabla \bfu) - \langle \bfV(\nabla \bfu) \rangle_{J_m}\rVert_{L^2(\Omega)}^2\,\mathrm{d}\sigma,\\
\textup{err}_{L^{p'}\langle \bfS\rangle_{J_m}} & := \left(\tau \sum_{m=1}^M \lVert \bfS (\nabla \bfu_{h,m}) - \langle \bfS(\nabla \bfu) \rangle_{J_m}\rVert^{p'}_{L^{p'}(\Omega)}\right)^{2/p'}.
\end{align*}
As in the previous experiments, the error $\textup{err}_{L^2 \bfV}$
dominates the error $\textup{err}_{L^\infty L^2}$. The solution to the
problem on $\Omega_1$ (solid lines) converges with the expected rate
$h \eqsim \tau$ for $p=1.5$. Since
$\bfS(\nabla u) \in L^{p'}(I; N^{\frac 13,p'}(\Omega))$, the rate of
$\textup{err}_{L^{p'}\langle \bfS\rangle_{J_m}}$ is worse but agrees
with the possible approximation rate.
For $p=3$ and $\Omega_1$ the rate of the error $\textup{err}_{L^2\bfV}$ is slightly worse than $\tau$.
This might be some pre-asymptotic effect or it is caused by quadrature errors due to the highly singular right-hand side $\bff := \partial_t\bfu - \textup{div}\,\bfS(\nabla \bfu)$. 
The averaged errors in the computation on $\Omega_2$ overcome, as stated in Theorem \ref{thm:main}, the reduced regularity of $\bfu$ in $t=0$: the squared error $\textup{err}_{L^\infty L^2}+\textup{err}_{L^2\bfV}$ converges with the optimal rate $h^2\eqsim \tau^2$. As in the previous experiments, the error $\textup{err}_{L^\infty L^2}$ is much smaller than the error $\textup{err}_{L^2\bfV}$. In addition, the error $\textup{err}_{L^\infty L^2}$ converges faster than $\textup{err}_{L^\infty\bfV}$ for $p=1.5$ and $\Omega_1$. For $p=3$ and $\Omega_1$, the rates of $\textup{err}_{L^\infty L^2}$ and $\textup{err}_{L^\infty\bfV}$ are similar.
\begin{figure}
\begin{center}
\begin{tikzpicture}
\begin{axis}[
clip=false,
width=.5\textwidth,
height=.45\textwidth,
xmode = log,
ymode = log,
xlabel={ndof},
cycle multi list={\nextlist MyColors4},
scale = {1},
clip = true,
legend cell align=left,
legend style={legend columns=1,legend pos= south west,font=\fontsize{7}{5}\selectfont}
]

	\addplot table [x=ndof,y=sqVerr] {Data/Exp4/Experiment4_avg_p_1.5_shifted_False_I_-1_1.txt};
	\addplot table [x=ndof,y=sqVerr1] {Data/Exp4/Experiment4_avg_p_1.5_shifted_False_I_-1_1.txt};		
		\addplot table [x=ndof,y=sqLinftyError] {Data/Exp4/Experiment4_avg_p_1.5_shifted_False_I_-1_1.txt};
	\addplot table [x=ndof,y=sqAerr] {Data/Exp4/Experiment4_avg_p_1.5_shifted_False_I_-1_1.txt};
	\addplot table [x=ndof,y=sqVerr] {Data/Exp4/Experiment4_avg_p_1.5_shifted_True_I_-1_1.txt};
	\addplot table [x=ndof,y=sqVerr1] {Data/Exp4/Experiment4_avg_p_1.5_shifted_True_I_-1_1.txt};
			\addplot table [x=ndof,y=sqLinftyError] {Data/Exp4/Experiment4_avg_p_1.5_shifted_True_I_-1_1.txt};
		\addplot table [x=ndof,y=sqAerr] {Data/Exp4/Experiment4_avg_p_1.5_shifted_True_I_-1_1.txt};
	\addplot[dashed,sharp plot,update limits=false] coordinates {(3e1,7e-1) (3e5,7e-3)};\label{plot:hh2}
		\addplot[dash dot,sharp plot,update limits=false] coordinates {(3e1,1e-2) (3e5,1e-6)};\label{plot:h2}
		\addplot[dotted,sharp plot,update limits=false] coordinates {(3e1,7e-1) (3e5,7*pow(10,-7/3)};\label{plot:dotted2div3}
	\legend{
	{$\textup{err}_{L^2 \bfV}$},
	{$\textup{err}_{L^2 \langle \bfV\rangle_{J_m}}$},
	{$\textup{err}_{L^\infty L^2}$ },
	{$\textup{err}_{L^{p'} \langle\bfS\rangle_{J_m}}$}};
\end{axis}
\end{tikzpicture}
\begin{tikzpicture}
\begin{axis}[
clip=false,
width=.5\textwidth,
height=.45\textwidth,
xmode = log,
ymode = log,
xlabel={ndof},
cycle multi list={\nextlist MyColors4},
scale = {1},
clip = true,
legend cell align=left,
legend style={legend columns=1,legend pos= south west,font=\fontsize{7}{5}\selectfont}
]
	\addplot table [x=ndof,y=sqVerr] {Data/Exp4/Experiment4_avg_p_3_shifted_False_I_-1_1.txt};
	\addplot table [x=ndof,y=sqVerr1] {Data/Exp4/Experiment4_avg_p_3_shifted_False_I_-1_1.txt};
	\addplot table [x=ndof,y=sqLinftyError] {Data/Exp4/Experiment4_avg_p_3_shifted_False_I_-1_1.txt};	
	\addplot table [x=ndof,y=sqAerr] {Data/Exp4/Experiment4_avg_p_3_shifted_False_I_-1_1.txt};
	\addplot table [x=ndof,y=sqVerr] {Data/Exp4/Experiment4_avg_p_3_shifted_True_I_-1_1.txt};
	\addplot table [x=ndof,y=sqVerr1] {Data/Exp4/Experiment4_avg_p_3_shifted_True_I_-1_1.txt};	
		\addplot table [x=ndof,y=sqLinftyError] {Data/Exp4/Experiment4_avg_p_3_shifted_True_I_-1_1.txt};
		\addplot table [x=ndof,y=sqAerr] {Data/Exp4/Experiment4_avg_p_3_shifted_True_I_-1_1.txt};
	\addplot[dashed,sharp plot,update limits=false] coordinates {(3e1,7e-1) (3e5,7e-3)};
		\addplot[dash dot,sharp plot,update limits=false] coordinates {(3e1,1e-2) (3e5,1e-6)};
	\legend{
	{$\textup{err}_{L^2 \bfV}$},
	{$\textup{err}_{L^2 \langle \bfV\rangle_{J_m}}$},
	{$\textup{err}_{L^\infty L^2}$},
	{$\textup{err}_{L^{p'} \langle\bfS\rangle_{J_m}}$}};
\end{axis}
\end{tikzpicture}
\caption{Convergence history plots for the experiment in Section \ref{sec:KnownSol} with $p=1.5$ (left), $p=3$ (right) and domains $\Omega_1$ (solid), $\Omega_2$ (dotted). The dotted line (\ref{plot:dotted2div3}) indicates the rate $\textup{ndof}^{-1/3}\eqsim h^{2/3}$, the dashed line (\ref{plot:hh2}) indicates the rate $\textup{ndof}^{-1/2}\eqsim h$ and the dash-dotted line (\ref{plot:h2}) indicates the rate $\textup{ndof}^{-1} \eqsim h^2$.}\label{fig:singulart}
\end{center}
\end{figure}
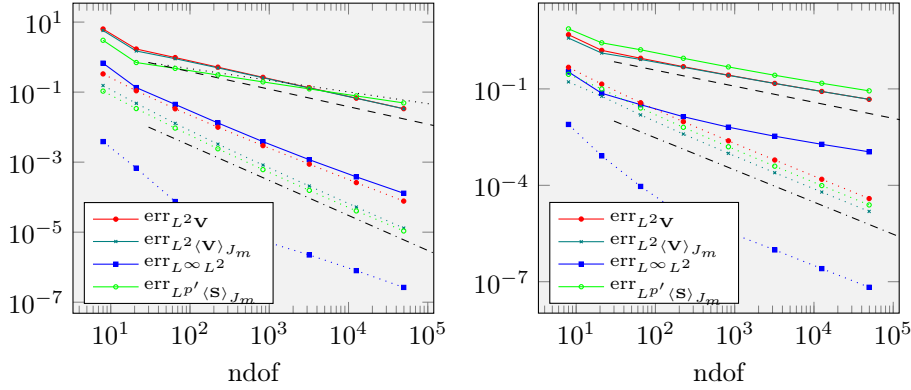

\appendix

\section{Orlicz spaces}
\label{sec:Orlicz spaces}

The following definitions and results are standard in the theory of
Orlicz spaces.  A continuous, convex and strictly increasing function
$\phi\,:\, [0,\infty) \to [0,\infty)$ satisfying
\begin{align*}
  \lim_{t\rightarrow0}\frac{\phi(t)}{t}=
  \lim_{t\rightarrow\infty}\frac{t}{\phi(t)}=0
\end{align*}
is called an $N$-function.

We say that $\phi$
satisfies the $\Delta_2$--condition, if there exists $c > 0$ such that
for all $t \geq 0$ holds $\phi(2t) \leq c\, \phi(t)$. By
$\Delta_2(\phi)$ we denote the smallest such constant. Since $\phi(t)
\leq \phi(2t)$ the $\Delta_2$-condition is equivalent to $\phi(2t)
\eqsim \phi(t)$ uniformly in $t$. Note that if $\Delta_2(\phi) < \infty$ then $\phi(t) \eqsim \phi(c\,t)$
uniformly in $t\geq 0$ for any fixed $c>0$. For a family $\phi_\lambda$ of $N$-functions we define
$\Delta_2(\set{\phi_\lambda}) := \sup_\lambda \Delta_2(\phi_\lambda)$.
By $L^\phi$ and $W^{k,\phi}$, $k\in \setN_0$,  we denote the classical Orlicz and
Orlicz-Sobolev spaces, i.e.\ $f \in L^\phi$ iff $\int
\phi(\abs{f})\dx < \infty$ and $f \in W^{k,\phi}$ iff $ \nabla^j f
\in L^\phi$, $0\le j\le k$.

By $\phi^*$ we denote the conjugate N-function of $\phi$, which is
given by $\phi^*(t) = \sup_{s \geq 0} (st - \phi(s))$. Then $\phi^{**}
= \phi$.

The following definitions and results are summarized
from~\cite{DR,BelDieKre2012,DieForTomWan20}. 
\begin{definition}
  \label{ass:phipp}
  Let $\phi$ be an N-function. 
  We say that $\phi$ is \emph{uniformly convex}, if
  $\phi$ is $C^1$ on $[0,\infty)$ and $C^2$ on $(0,\infty)$ and
  assume that 
  \begin{align}
    \label{eq:phipp}
    \phi'(t) &\eqsim t\,\phi''(t)
  \end{align}
  uniformly in $t > 0$. The constants hidden in $\eqsim$ are called the
  \emph{characteristics of~$\phi$}.
\end{definition}
Note that~\eqref{eq:phipp} is stronger than
$\Delta_2(\phi,\phi^*)<\infty$. In fact, the $\Delta_2$-constants can
be estimated in terms of the characteristics of~$\phi$.

Associated to an uniformly convex $N$-function $\phi$ we define the tensors
\begin{align*}
  \bfS(\bfxi)&:=\frac{\phi'(\abs{\bfxi})}{\abs{\bfxi}}\bfxi,\quad
                    \bfxi\in\mathbb R^{N\times n}
  \\
  \bfV(\bfxi)&:=\sqrt{\frac{\phi'(\abs{\bfxi})}{\abs{\bfxi}}}\,\bfxi,\quad
                    \bfxi\in\mathbb R^{N\times n}.
\end{align*}
We define the \emph{shifted} $N$-function $\phi_a$ for $a\geq 0$ by
\begin{align}
  \label{eq:def_shift}
  \phi_a(t) &:= \int_0^t \frac{\phi'(a+s)}{a+s} s \ds.
\end{align}
In our application (cf. \eqref{eq:def-S}, \eqref{eq:def-V} and \eqref{eq:def-phi}) $\phi$ is given by
\begin{align*}
\phi(t) = \int_0^t (\kappa +  s)^{p-2} s \dd s
\end{align*}
and the tensors are
\begin{align*}
\bfS(\bfxi) = \left( \kappa + \abs{\bfxi} \right)^{p-2} \bfxi \quad \text{ and } \quad \bfV(\bfxi) = \left( \kappa + \abs{\bfxi} \right)^\frac{p-2}{2} \bfxi.
\end{align*}
\begin{lemma}[Equivalence lemma]
  \label{lem:hammer}
  We have
  \begin{align*}
    \begin{aligned}
      \big({\bfS}(\bfP) - {\bfS}(\bfQ)\big) \cdot
      \big(\bfP-\bfQ \big) &\eqsim \bigabs{ \bfV(\bfP) -
        \bfV(\bfQ)}^2
      \\
      &\eqsim \phi_{\abs{\bfP}}(\abs{\bfP - \bfQ})
      \\
      &\eqsim \phi''\big( \abs{\bfP} + \abs{\bfQ}
      \big)\abs{\bfP - \bfQ}^2
    \end{aligned}
  \end{align*}
  uniformly in $\bfP, \bfQ \in \setR^{N \times n}$.  Moreover,
  uniformly in $\bfQ \in \setR^{N \times n}$,
  \begin{align*}
    \bfS(\bfQ) \cdot \bfQ &\eqsim \abs{\bfV(\bfQ)}^2\eqsim
                                 \phi(\abs{\bfQ})\\
    \abs{{\bfS}(\bfP) - {\bfS}(\bfQ)}&\eqsim\big(\phi_{\abs{\bfP}}\big)'(\abs{\bfP - \bfQ}).
  \end{align*}
  The constants depend only on the characteristics of $\phi$.
\end{lemma}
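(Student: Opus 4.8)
The plan is to express every quantity appearing in the lemma through the second derivative $\phi''$ evaluated at $\abs{\bfP}+\abs{\bfQ}$ and through the shifted $N$-function $\phi_{\abs{\bfP}}$, relying exclusively on the uniform convexity $\phi'(t)\eqsim t\,\phi''(t)$ and the $\Delta_2$-bounds it entails (in particular $\Delta_2(\phi,\phi^*)<\infty$, hence $t\,\phi'(t)\eqsim\phi(t)$).

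First I would record the two elementary facts about shifted $N$-functions that drive the whole proof. From the definition \eqref{eq:def_shift} and $\phi'(a+s)\eqsim (a+s)\phi''(a+s)$ one obtains $(\phi_a)'(t)=\frac{\phi'(a+t)}{a+t}\,t\eqsim \phi''(a+t)\,t$ immediately, and, after integrating in $s$ (the weight $s$ suppressing the possible singularity of $\phi''$ at the origin when $p<2$), also $\phi_a(t)\eqsim \phi''(a+t)\,t^2$. Together with the shift-change principle $\phi_a(t)\eqsim\phi_b(t)$ for $a\eqsim b+t$ (see \cite{DR}) and the elementary comparison $\abs{\bfP}+\abs{\bfP-\bfQ}\eqsim\abs{\bfP}+\abs{\bfQ}$, this gives $\phi_{\abs{\bfP}}(\abs{\bfP-\bfQ})\eqsim \phi''(\abs{\bfP}+\abs{\bfQ})\,\abs{\bfP-\bfQ}^2$, identifying the last two members of the first chain. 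The short chain $\bfS(\bfQ)\cdot\bfQ\eqsim\abs{\bfV(\bfQ)}^2\eqsim\phi(\abs{\bfQ})$ is then immediate, since $\bfS(\bfQ)\cdot\bfQ=\abs{\bfV(\bfQ)}^2=\phi'(\abs{\bfQ})\,\abs{\bfQ}$ by direct computation and $\phi'(t)\,t\eqsim\phi(t)$.

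For the main chain I would write $\bfS(\bfP)-\bfS(\bfQ)=\int_0^1 D\bfS(\bfQ_t)(\bfP-\bfQ)\,\dd t$ with $\bfQ_t:=(1-t)\bfQ+t\bfP$. A direct computation shows that the symmetric tensor $D\bfS(\bfxi)$ has radial eigenvalue $\phi''(\abs{\bfxi})$ and tangential eigenvalue $\phi'(\abs{\bfxi})/\abs{\bfxi}$, which coincide up to constants by uniform convexity; the same scaling holds for $\abs{D\bfV(\bfxi)}^2$. Contracting with $\bfP-\bfQ$ and integrating reduces the quadratic form to $\int_0^1\phi''(\abs{\bfQ_t})\,\dd t\,\abs{\bfP-\bfQ}^2$, so the crux is the segment estimate $\int_0^1\phi''(\abs{\bfQ_t})\,\dd t\eqsim\phi''(\abs{\bfP}+\abs{\bfQ})$. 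The final identity $\abs{\bfS(\bfP)-\bfS(\bfQ)}\eqsim(\phi_{\abs{\bfP}})'(\abs{\bfP-\bfQ})$ follows by the same route, replacing the quadratic contraction by the vector norm: $\abs{\bfS(\bfP)-\bfS(\bfQ)}\eqsim\int_0^1\phi''(\abs{\bfQ_t})\,\dd t\,\abs{\bfP-\bfQ}\eqsim\phi''(\abs{\bfP}+\abs{\bfQ})\,\abs{\bfP-\bfQ}$.

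I expect two points to require the real work. The first is the segment estimate itself: along $\bfQ_t$ the modulus $\abs{\bfQ_t}$ may dip close to zero, so one cannot simply pull $\phi''$ out of the integral. The clean way is to prove directly that the quadratic form is $\eqsim\phi_{\abs{\bfQ}}(\abs{\bfP-\bfQ})$, distinguishing the regimes $\abs{\bfP-\bfQ}\lesssim\abs{\bfQ}$ (where $\abs{\bfQ_t}\eqsim\abs{\bfQ}$ on a definite portion of $[0,1]$) and $\abs{\bfP-\bfQ}\gtrsim\abs{\bfQ}$ (where monotonicity and $\Delta_2$ take over), and then to invoke shift-change to pass between the shifts $\abs{\bfP}$ and $\abs{\bfQ}$. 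The second is the equivalence $\abs{\bfV(\bfP)-\bfV(\bfQ)}^2\eqsim(\bfS(\bfP)-\bfS(\bfQ))\cdot(\bfP-\bfQ)$: since the left-hand side is the square of an integral of $D\bfV$, Jensen yields only the bound $\lesssim$, and the reverse inequality needs the same regime splitting to control the tangential and radial contributions separately. Both points are standard in the theory of uniformly convex $N$-functions, and I would either carry out the case analysis explicitly or cite \cite{DR,DieForTomWan20}.
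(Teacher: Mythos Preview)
The paper does not prove this lemma at all: it is stated in the appendix as one of the ``definitions and results \ldots\ summarized from~\cite{DR,BelDieKre2012,DieForTomWan20}'', with no proof given. Your sketch is the standard argument carried out in those references (integral representation along the segment $\bfQ_t$, the eigenvalue computation for $D\bfS$ and $D\bfV$, and the regime splitting $\abs{\bfP-\bfQ}\lesssim\abs{\bfQ}$ versus $\abs{\bfP-\bfQ}\gtrsim\abs{\bfQ}$ to handle the segment integral), so your approach matches the literature the paper cites; your closing remark that one could simply cite \cite{DR,DieForTomWan20} is in fact exactly what the paper does.
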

\begin{lemma}
  \label{lem:young2}
  Let $\phi$ be an uniformly convex N-function. Then for each $\delta>0$ there
  exists $C_\delta \geq 1$ (only depending on~$\delta$ and the
  characteristics of~$\phi$) such that
  \begin{align*}
    \big({\bfS}(\bfP) - {\bfS}(\bfQ)\big) \cdot
    \big(\bfR-\bfQ \big) &\leq \delta\bigabs{ \bfV(\bfP) -
                           \bfV(\bfQ)}^2+C_\delta \bigabs{ \bfV(\bfR)                          -\bfV(\bfQ)}^2
  \end{align*}
  for all $\bfP,\bfQ,\bfR\in\setR^{N\times n}$.
\end{lemma}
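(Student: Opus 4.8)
The plan is to reduce the claim to a scalar Young-type inequality for the shifted $N$-function $\phi_{\abs{\bfQ}}$ and then to translate back through the equivalence Lemma~\ref{lem:hammer}. First I would apply the Cauchy--Schwarz inequality to bound the left-hand side by $\abs{\bfS(\bfP)-\bfS(\bfQ)}\,\abs{\bfR-\bfQ}$. By the last equivalence in Lemma~\ref{lem:hammer}, used with the roles of $\bfP$ and $\bfQ$ interchanged (which is legitimate since $\abs{\bfS(\bfP)-\bfS(\bfQ)}$ is symmetric in its arguments), one has $\abs{\bfS(\bfP)-\bfS(\bfQ)}\eqsim (\phi_{\abs{\bfQ}})'(\abs{\bfP-\bfQ})$, so it suffices to estimate the product $(\phi_{\abs{\bfQ}})'(\abs{\bfP-\bfQ})\,\abs{\bfR-\bfQ}$. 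I deliberately use the shift $\abs{\bfQ}$ here, rather than $\abs{\bfP}$, so that both terms on the right of the asserted inequality appear as shifts based at $\bfQ$ and no shift-change is needed.

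Writing $\psi := \phi_{\abs{\bfQ}}$, $s := \abs{\bfP-\bfQ}$ and $t := \abs{\bfR-\bfQ}$, the goal becomes $\psi'(s)\,t \le \delta\,\psi(s) + C_\delta\,\psi(t)$ (we may assume $\delta\in(0,1]$, the case $\delta\ge 1$ being covered by $\delta=1$). This I would obtain from the scaled Young inequality: since $\psi$ and its conjugate $\psi^*$ both satisfy $\Delta_2$, convexity gives $\psi^*(\lambda a)\le \lambda\,\psi^*(a)$ for $\lambda\in(0,1]$, and combining the ordinary Young inequality $ab\le \psi^*(a)+\psi(b)$ with the $\Delta_2$-condition of $\psi$ yields, for every $\delta\in(0,1]$, a constant $C_\delta$ with $ab\le \delta\,\psi^*(a)+C_\delta\,\psi(b)$ for all $a,b\ge 0$. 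Applying this with $a=\psi'(s)$ and $b=t$, together with the standard identity $\psi^*(\psi'(s)) = s\,\psi'(s)-\psi(s)\lesssim \psi(s)$ (again a consequence of $\Delta_2$), gives $\psi'(s)\,t\le \delta\,\psi(s)+C_\delta\,\psi(t)$ after relabelling $\delta$.

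Finally I would translate back: by the second equivalence in Lemma~\ref{lem:hammer} (used symmetrically as above) one has $\phi_{\abs{\bfQ}}(\abs{\bfP-\bfQ})\eqsim \abs{\bfV(\bfP)-\bfV(\bfQ)}^2$ and $\phi_{\abs{\bfQ}}(\abs{\bfR-\bfQ})\eqsim \abs{\bfV(\bfR)-\bfV(\bfQ)}^2$, which turns the previous line into the asserted estimate, absorbing the equivalence constants into a renamed $\delta$ and $C_\delta$. The one genuinely delicate point is uniformity in the shift: all constants produced above---the hidden constants in Lemma~\ref{lem:hammer}, the $\Delta_2$-constants of $\psi=\phi_{\abs{\bfQ}}$ and of $\psi^*$, and hence $C_\delta$---must be independent of $\bfQ$. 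This is precisely the statement that the characteristics, and thus the $\Delta_2$-constants, of the shifted functions $\phi_a$ are bounded uniformly in $a\ge 0$ in terms of the characteristics of $\phi$, a standard fact for uniformly convex $N$-functions (see the references collected in the appendix). Granting this, the resulting $C_\delta$ depends only on $\delta$ and the characteristics of $\phi$, as claimed.
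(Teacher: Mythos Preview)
Your proof is correct and is the standard argument for this estimate. The paper itself does not give a proof of Lemma~\ref{lem:young2}; it is quoted from the references~\cite{DR,BelDieKre2012,DieForTomWan20}, where precisely this route---Cauchy--Schwarz, the equivalence $\abs{\bfS(\bfP)-\bfS(\bfQ)}\eqsim(\phi_{\abs{\bfQ}})'(\abs{\bfP-\bfQ})$ from Lemma~\ref{lem:hammer}, the scaled Young inequality for~$\phi_{\abs{\bfQ}}$ together with $\psi^*(\psi'(s))\lesssim\psi(s)$, and the uniform $\Delta_2$-bounds for the shifted family~$(\phi_a)_{a\ge 0}$---is carried out.
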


\begin{lemma}[Change of Shift]
 \label{lem:shift_ch}
 Let $\phi$ be an uniformly convex N-function. Then for each $\delta>0$ there
 exists $C_\delta \geq 1$ (only depending on~$\delta$ and the
 characteristics of~$\phi$) such that
  \begin{align*}
    \phi_{\abs{\bfa}}(t)&\leq C_\delta\, \phi_{\abs{\bfb}}(t)
    +\delta\, \abs{\bfV(\bfa) - \bfV(\bfb)}^2,
  \end{align*}
 for all $\bfa,\bfb\in\setR^{N \times n}$ and $t\geq0$.
\end{lemma}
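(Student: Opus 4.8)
The plan is to reduce this (non-symmetric) claim to a scalar inequality for the shift parameters and then to balance the two right-hand terms against each other by Young's inequality for $N$-functions. Since $\phi_{\abs{\bfa}}$ depends on $\bfa$ only through $a:=\abs{\bfa}$, I also set $b:=\abs{\bfb}$. By Lemma~\ref{lem:hammer} one has $\abs{\bfV(\bfa)-\bfV(\bfb)}^2\eqsim\phi''(a+b)\,\abs{\bfa-\bfb}^2\geq\phi''(a+b)\,\abs{a-b}^2$, while the scalar equivalence $\phi_c(s)\eqsim\phi''(c+s)\,s^2$ — valid for all $c,s\geq0$ since $\phi'(r)\eqsim r\phi''(r)$ and $\Delta_2(\phi)<\infty$ — gives $\phi_{b}(\abs{a-b})\eqsim\phi''(b+\abs{a-b})\,\abs{a-b}^2\eqsim\phi''(a+b)\,\abs{a-b}^2$, because $b+\abs{a-b}\eqsim a+b$. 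Hence $\phi_{b}(\abs{a-b})\lesssim\abs{\bfV(\bfa)-\bfV(\bfb)}^2$, and it suffices to prove the purely scalar estimate
\[
  \phi_{a}(t)\;\leq\;C_\delta\,\phi_{b}(t)\,+\,\delta\,\phi_{b}(\abs{a-b}),\qquad a,b,t\geq0 .
\]

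First I would dispose of the regime $t\geq\abs{a-b}$. Here $\abs{(a+t)-(b+t)}=\abs{a-b}\leq t\leq\min\set{a+t,b+t}$, so $a+t\eqsim b+t$; uniform convexity yields $\Delta_2(\phi'')<\infty$, whence $\phi''(a+t)\eqsim\phi''(b+t)$ and therefore $\phi_{a}(t)\eqsim\phi_{b}(t)$. Thus the first term alone dominates, with a constant independent of~$\delta$.

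In the regime $t<\abs{a-b}$ the two scales $a+t$ and $b+t$ may genuinely separate, and I would split once more at the threshold $t\eqsim\sqrt{\delta}\,\abs{a-b}$. For $t\gtrsim\sqrt{\delta}\,\abs{a-b}$ a direct computation shows that $a+t$ and $b+t$ are comparable up to a constant depending only on~$\delta$, so $\Delta_2(\phi'')$ again gives $\phi_{a}(t)\leq C_\delta\,\phi_{b}(t)$. In the complementary range the smallness $t^2\lesssim\delta\,\abs{a-b}^2$ is meant to be converted, via the monotonicity $\phi_{a}(t)\leq\phi_{a}(\abs{a-b})\eqsim\phi_{b}(\abs{a-b})$, into the gain~$\delta$; this conversion is immediate whenever $\phi''(a+t)\eqsim\phi''(a+\abs{a-b})$, for instance when $a\geq b$. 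Collecting the cases and recalling $\phi_{b}(\abs{a-b})\lesssim\abs{\bfV(\bfa)-\bfV(\bfb)}^2$ would then prove the lemma after renaming~$\delta$.

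The one genuinely delicate case, and the main obstacle, is sub-quadratic growth (so $\phi''$ is \emph{decreasing}) together with $a<b$ and $t\ll\abs{a-b}$: there $a+t$ and $a+b$ need not be comparable, $\phi''(a+t)\gg\phi''(b+t)$, so $C_\delta\,\phi_{b}(t)$ is far too small to absorb $\phi_{a}(t)$, and the crude bound $\phi_{a}(t)\leq\phi_{a}(\abs{a-b})$ only produces the constant~$1$ rather than~$\delta$. Here a true Young's inequality is unavoidable; writing everything at scale $b$, the required estimate takes the model form $\phi(t)\leq\delta\,\phi(b)+C_\delta\,\phi''(b)\,t^2$ for $0\leq t\leq b$, which is precisely Young's inequality for the conjugate pair $(\phi_{b},\phi_{b}^*)$ and holds exactly because $\phi$ is uniformly convex, i.e.\ $\Delta_2(\phi,\phi^*)<\infty$ with constants controlled by the characteristics (cf.\ \cite{DR,DieForTomWan20}). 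The hidden gain that reconciles ``$a+t$ small'' with ``$\abs{a-b}$ large'' is what ultimately produces the factor~$\delta$.
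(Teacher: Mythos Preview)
The paper does not actually prove this lemma: it is listed in the appendix among ``definitions and results \dots\ summarized from~\cite{DR,BelDieKre2012,DieForTomWan20}'', so there is no in-paper argument to compare against. I therefore evaluate your attempt against the standard proof in those references.

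Your outline is the right one and matches the literature: reduce to the scalars $a=\abs{\bfa}$, $b=\abs{\bfb}$ via Lemma~\ref{lem:hammer}, observe $\phi_b(\abs{a-b})\eqsim\phi''(a+b)\abs{a-b}^2\lesssim\abs{\bfV(\bfa)-\bfV(\bfb)}^2$, and then prove the scalar shift-change by a case split at $t\approx\abs{a-b}$. The easy regime $t\ge\abs{a-b}$ is handled exactly as you do.

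The genuine gap is the ``delicate case'' $a<b$, $t\ll\abs{a-b}$. Two points:
\begin{itemize}
\item The threshold $\sqrt{\delta}\,\abs{a-b}$ is tailored to the situation where $\phi''(a+t)\eqsim\phi''(a+\abs{a-b})$ (your $a\ge b$ sub-case), because only then does $t^2\le\delta\abs{a-b}^2$ convert directly into a factor~$\delta$. In general you must split at $\lambda\abs{a-b}$ with $\lambda$ chosen from the characteristics of~$\phi$, not at $\sqrt{\delta}\,\abs{a-b}$.
\item Your closing paragraph only treats the model $a=0$ and asserts that the general case is ``precisely Young's inequality for $(\phi_b,\phi_b^*)$'', which is not quite right and in any case is not a proof. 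The clean way to finish --- and this is how the cited references do it --- is to use that the family $\set{\phi_a}$ is uniformly convex with characteristics controlled by those of~$\phi$; in particular there is $p_0>1$, independent of~$a$, with $\phi_a(\lambda s)\le c\,\lambda^{p_0}\phi_a(s)$ for all $\lambda\in(0,1]$. Since $\phi_a(\abs{a-b})\eqsim\phi''(a+b)\abs{a-b}^2\eqsim\phi_b(\abs{a-b})$, one gets for $t\le\lambda\abs{a-b}$
\[
  \phi_a(t)\le\phi_a(\lambda\abs{a-b})\le c\,\lambda^{p_0}\,\phi_a(\abs{a-b})
  \le c'\,\lambda^{p_0}\,\phi_b(\abs{a-b})\le\delta\,\phi_b(\abs{a-b})
\]
upon choosing $\lambda=\lambda(\delta)$; for $t>\lambda\abs{a-b}$ your own argument already gives $\phi_a(t)\le C_\lambda\,\phi_b(t)$. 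If you prefer your ``model at scale $b$'' picture, the missing justification for reducing general $a<b$ to $a=0$ is the shift-composition identity $(\phi_a)_{b-a}=\phi_b$, which makes the substitution $\tilde\phi:=\phi_a$, $\tilde b:=b-a$, $\tilde a:=0$ legitimate; but you should state it.
\end{itemize}
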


\newcommand{\etalchar}[1]{$^{#1}$}
\providecommand{\bysame}{\leavevmode\hbox to3em{\hrulefill}\thinspace}
\providecommand{\MR}{\relax\ifhmode\unskip\space\fi MR }
\providecommand{\MRhref}[2]{%
  \href{http://www.ams.org/mathscinet-getitem?mr=#1}{#2}
}
\providecommand{\href}[2]{#2}


\end{document}